\newcommand{\forces}{\Vdash} 
\newcommand{\con}{{\mathfrak c}}
\newcommand{\can}{{}^{\omega}2} 
\newcommand{\baire}{{}^{\omega}\omega}
\newcommand{\rest}{{\restriction}}
\newcommand{\dom}{{\rm dom}}
\newcommand{\pfs}{{}^{\mathunderaccent\smile-3 \omega}2}
\def\mathunderaccent#1#2 {\let\theaccent#1\skewfactor#2
\mathpalette\putaccentunder}
\def\putaccentunder#1#2{\oalign{$#1#2$\crcr\hidewidth
\vbox to.2ex{\hbox{$#1\skew\skewfactor\theaccent{}$}\vss}\hidewidth}}
\def\name{\mathunderaccent\tilde-3 }
\newcommand{\cA}{{\mathcal A}}
\newcommand{\bE}{{\bf E}}
\newcommand{\bF}{{\bf F}}
\newcommand{\cI}{{\mathcal I}}
\newcommand{\cJ}{{\mathcal J}}
\newcommand{\cN}{{\mathcal N}}
\newcommand{\cM}{{\mathcal M}}
\newcommand{\bbP}{{\mathbb P}}
\newcommand{\bbQ}{{\mathbb Q}}
\newcommand{\mbR}{{\mathbb R}}
\newcommand{\bbS}{{\mathbb S}}
\newcommand{\bV}{{\mathbf V}}
\newcommand{\cZ}{{\mathcal Z}}
\newcommand{\bbZ}{{\mathbb Z}}
\newtheorem{theorem}{Theorem}[section] 
\newtheorem{claim}{Claim}[theorem]
\newtheorem{lemma}[theorem]{Lemma} 
\newtheorem{proposition}[theorem]{Proposition} 
\newtheorem{corollary}[theorem]{Corollary} 
\newtheorem{observation}[theorem]{Observation} 
\theoremstyle{definition}
\newtheorem{problem}[theorem]{Problem} 
\newtheorem{definition}[theorem]{Definition}
\theoremstyle{remark}
\newtheorem{remark}[theorem]{Remark}
\begin{document}

\title{Small--large subgroups of the reals}

\author{Andrzej Ros{\l}anowski}
\address{Department of Mathematics\\
 University of Nebraska at Omaha\\
 Omaha, NE 68182-0243, USA}
\email{roslanow@member.ams.org}
\urladdr{http://www.unomaha.edu/logic}

\author{Saharon Shelah}
\address{Institute of Mathematics\\
 The Hebrew University of Jerusalem\\
 91904 Jerusalem, Israel\\
 and  Department of Mathematics\\
 Rutgers University\\
 New Brunswick, NJ 08854, USA}
\email{shelah@math.huji.ac.il}
\urladdr{http://www.math.rutgers.edu/$\sim$shelah}

\thanks{Both authors acknowledge support from the United States-Israel
  Binational Science Foundation (Grant no. 2010405). Publication 1081 of the
  second author.}

\subjclass{Primary 03E35; Secondary: 28A05, 54A05}
\date{May 23, 2016}

\begin{abstract}
  We are interested in subgroups of the reals that are small in one
  and large in another sense. We prove that, in ZFC, there exists a
  non--meager Lebesgue null subgroup of $\mbR$, while it is consistent
  that there there is no non--null meager subgroup of $\mbR$. This
  answers a question from Filipczak, Ros{\l}anowski and Shelah
  \cite{FRSh:1031}.  
\end{abstract}
\maketitle

\section{Introduction}
Subgroups of the reals which are small in one and large in another sense
were crucial in Filipczak, Ros{\l}anowski and Shelah \cite{FRSh:1031}. If
there is a non--meager Lebesgue null subgroup of $(\mbR,+)$, then there is
no translation invariant Borel hull operation on the $\sigma$--ideal $\cN$
of Lebesgue null sets. That is, there is no mapping $\psi$ from $\cN$ to
Borel sets such that for each null set $A\subseteq \mbR$:
\begin{itemize}
\item $A\subseteq \psi(A)$ and $\psi(A)$ is null, and
\item $\psi(A+t)=\psi(A)+t$ for every $t\in \mbR$. 
\end{itemize}
Parallel claims hold true if ``Lebesgue null'' is interchanged with
``meager'' and/or $(\mbR,+)$ is replaced with $(\can,+_2)$.  

If $\cM$ is the $\sigma$--ideal of meager subsets of $\mbR$ (and $\cN$ is
the null ideal on $\mbR$) and $\{\cI,\cJ\}=\{\cN,\cM\}$, then various set
theoretic assumptions imply the existence of a subgroup of $\mbR$ which
belongs to $\cI$ but not to $\cJ$. But in \cite[Problem 4.1]{FRSh:1031} we
asked if the existence of such subgroups can be shown in ZFC. This question
is interesting {\em per se}, regardless of its connections to translation
invariant Borel hulls.

The present paper presents two theorems. First, in Theorem
\ref{firstres} we give ZFC examples of null non-meager subgroups of
$(\can,+_2)$ and $(\mbR,+)$, respectively. Next in Theorem
\ref{mainthm} we show that it is consistent with ZFC that every meager
subgroup of $(\can,+_2)$ and/or $(\mbR,+)$ has Lebesgue measure
zero. This answers \cite[Problem 4.1]{FRSh:1031}. Also, our results
give another example of a strange asymmetry between measure and
category.  \bigskip

\noindent {\bf Notation}\qquad Our notation is rather standard and
compatible with that of classical textbooks (like Jech \cite{J} or
Bartoszy\'nski and Judah \cite{BaJu95}). However, in forcing we keep the
older convention that {\em a stronger condition is the larger one}. 

\begin{enumerate}

\item The Cantor space $\can$ of all infinite sequences with values 0 and 1
is equipped with the natural product topology, the product measure $\lambda$
and the group operation of coordinate-wise addition $+_2$ modulo 2.  
\item Ordinal numbers will be denoted be the lower case initial letters of
  the Greek alphabet $\alpha,\beta,\gamma,\delta$. Finite ordinals
  (non-negative integers) will be denoted by letters $i,j,k,\ell,m,n$ while
  integers will be called $L,M$.
\item Most of our intervals will be intervals of non-negative integers, so
  $[m,n)=\{k\in \omega: m\leq k<n\}$ etc. They will be denoted by letter $J$
  (with possible indices). However, we will also use the notation $[0,1)$ to
  denote the unit interval of reals.
\item The Greek letter $\kappa$ will stand for an uncountable cardinal such
  that $\kappa^{\aleph_0} = \kappa\geq\aleph_2$.
\item For a forcing notion $\bbP$, all $\bbP$--names for objects in
  the extension via $\bbP$ will be denoted with a tilde below (e.g.,
  $\name{\tau}$, $\name{X}$), and $\name{G}_\bbP$ will stand for the
  canonical $\bbP$--name for the generic filter in $\bbP$.
\item We fix a well ordering $\prec^*$ of all hereditarily finite sets.  
\item The set of all partial finite functions with domains included in
  $\omega$ and with values in $2$ is denoted  $\pfs$.  
\end{enumerate}

\section{Null non--meager}
Here we will give a ZFC construction of a non--meager Lebesgue null subgroup
of the reals. The main construction is done in $\can$ and then we transfer
it to $\mbR$ using the standard binary expansion $\bE$.  

\begin{definition}
\label{binexp}
Let $D_0^\infty=\{x\in \can:(\exists^\infty i<\omega)(x(i)=0)\}$ and for
$x\in D^\infty_0$ let $\bE(x)=\sum\limits_{i=0}^\infty x(i)2^{-(i+1)}$. 
\end{definition}

\begin{proposition}
\label{expanding}
\begin{enumerate}
\item The function $\bE:D^\infty_0\longrightarrow [0,1)$ is a continuous
bijection, it preserves both the measure and the category.   
\item Assume that 
\begin{enumerate}
\item[(a)] $x,y,z\in D^\infty_0$, $\bE(z)=\bE(x)+\bE(y)$ modulo 1, and 
\item[(b)] $n<m<\omega$ and both $x\rest [n,m]$ and $y\rest [n,m]$ are
  constant. 
\end{enumerate}
Then $z\rest [n,m-1]$ is constant. 
\item Assume that 
\begin{enumerate}
\item[(a)] $x,y\in D^\infty_0$, $0<\bE(x)$ and $\bE(y)=1-\bE(x)$, 
\item[(b)] $n<m<\omega$ and $x\rest [n,m]$ is constant. 
\end{enumerate}
Then $y\rest [n,m-1]$ is constant. 
\end{enumerate}
\end{proposition}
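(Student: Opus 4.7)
For (1), the bijectivity, continuity, and measure/category-preservation properties of $\bE$ restricted to $D^\infty_0$ are classical facts about binary expansions, and I would verify them by direct inspection. The restriction to $D^\infty_0$ discards exactly the eventually-$1$ representations of dyadic rationals, so $\bE$ is a bijection onto $[0,1)$; cylinders $[s] \cap D^\infty_0$ of length $\lh(s)=k$ correspond bijectively to dyadic half-open intervals of length $2^{-k}$, which yields measure preservation. Category preservation follows from this cylinder correspondence together with the observation that $\can \setminus D^\infty_0$ is countable, hence meager.

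For (2), my plan is to decompose each summand across the interval $[n,m]$. Writing $a, b \in \{0,1\}$ for the constant values of $x \rest [n,m]$ and $y \rest [n,m]$, I would split
\[
  \bE(x) = P_x + M_x + T_x, \qquad \bE(y) = P_y + M_y + T_y,
\]
where $P_x = \sum_{i<n} x(i) 2^{-(i+1)}$ is the dyadic prefix supported on positions $<n$, $M_x = a(2^{-n} - 2^{-(m+1)})$ is the constant middle, and $T_x = \sum_{i>m} x(i) 2^{-(i+1)}$ is the tail. The crucial estimate, which uses $D^\infty_0$ essentially, is the \emph{strict} bound $T_x, T_y < 2^{-(m+1)}$; so $T_x + T_y < 2^{-m}$ strictly, and the combined tail contributes $0$ at every bit-position $\leq m-1$. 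The prefix contribution $P_x + P_y$ is supported on positions $<n$, and since binary carries propagate from large to small indices, prefix carries cannot reach positions $\geq n$. What remains is a case split on $a+b \in \{0,1,2\}$: the middle block is either empty, or $1$'s at $[n,m]$, or (after the ``doubling'' overflow) $1$'s at the shifted range $[n-1,m-1]$. In each case one traces the only carry exchanges that can affect positions $[n,m-1]$ --- namely the tail's bit at position $m$ entering the middle, and (in the shifted case) the middle's bit at position $n-1$ entering the prefix --- and checks that these leave positions $[n,m-1]$ of $z$ pinned to a single common value (all $0$ or all $1$). The final mod-$1$ reduction only strips an integer overflow and does not touch bits at positions $\geq 0$. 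I expect the main bookkeeping nuisance to be the case $a+b=1$, which splits further depending on whether the tail's bit at position $m$ triggers a carry chain that propagates through the entire middle block.

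For (3), my plan is to identify $y$ as the additive inverse of $x$ modulo $1$ and split on whether $x$ has infinitely many or only finitely many $1$'s. In the first case, the bit-complement $\bar x$ lies in $D^\infty_0$ and satisfies $\bE(\bar x) = 1 - \bE(x) = \bE(y)$, so $y = \bar x$ by injectivity, and constancy of $x \rest [n,m]$ transfers directly to constancy of $y \rest [n,m]$. In the second case, let $L$ be the largest index with $x(L) = 1$ (which exists since $\bE(x) > 0$); a short direct computation identifies $y$ as the unique element of $D^\infty_0$ satisfying $y(i) = 1 - x(i)$ for $i < L$, $y(L) = 1$, and $y(i) = 0$ for $i > L$. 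The three possibilities for the position of $L$ relative to $[n,m]$ --- $L<n$, $L>m$, or $n \leq L \leq m$ (which forces $L = m$ and $a = 1$) --- then each yield constancy of $y \rest [n,m-1]$ by inspection.
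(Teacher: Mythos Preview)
Your proposal is correct and follows exactly the approach the paper indicates: the paper's own proof reads in full ``(1) Well known. (2,3) Straightforward (just consider the possible constant values and analyze how the addition is performed).'' Your prefix/middle/tail decomposition with the case split on $a+b\in\{0,1,2\}$ for (2), and the infinitely/finitely-many-ones split for (3), are precisely the routine verifications the authors leave to the reader, carried out carefully.
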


\begin{proof}
(1)\qquad Well known.

\noindent (2,3)\quad Straightforward (just consider the possible constant
values and analyze how the addition is performed).
\end{proof}

\begin{theorem}
\label{firstres}
\begin{enumerate}
\item There exists a null non-meager subgroup of $(\can,+_2)$.
\item There exists a null non-meager subgroup of $(\mbR,+)$.
\end{enumerate}
\end{theorem}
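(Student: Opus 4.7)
My strategy is: first construct the subgroup for $(\can,+_2)$ in part (1), then transfer to $(\mbR,+)$ in part (2) using Proposition~\ref{expanding}. For (1), fix an increasing sequence $0=n_0<n_1<\dots$ with $|J_k|:=n_{k+1}-n_k$ growing so fast that $\sum_k 2^{1-|J_k|}<\infty$, where $J_k:=[n_k,n_{k+1})$. Define the ``null frame''
$$N:=\{x\in\can : x\rest J_k\text{ is constant for infinitely many }k\}.$$
Borel--Cantelli gives $\lambda(N)=0$, while a routine density argument shows $N$ is a dense $G_\delta$ (hence comeager). The key observation is that $N$ is \emph{not} a subgroup, so we cannot simply set $G:=N$.

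I would build $G\subseteq N$ by transfinite recursion. By Bartoszy\'nski's characterization, every meager $F_\sigma$ set is contained in $\{x:\forall^\infty k\; x\rest I_k\neq s_k\}$ for some pair $\sigma=((I_k)_k,(s_k)_k)$ with pairwise disjoint $I_k$'s and $s_k\in 2^{I_k}$; enumerate a cofinal family of such codes as $\{\sigma_\alpha:\alpha<\lambda\}$ (of size $\lambda\le\con$). Construct a chain of subgroups $G_0\subseteq G_1\subseteq\dots\subseteq G_\alpha\subseteq N$, taking unions at limits, so that at stage $\alpha+1$ some $x_\alpha\in\can$ satisfies (i)~$g+_2 x_\alpha\in N$ for every $g\in G_\alpha$ --- equivalently, for each $g\in G_\alpha$ there are infinitely many $k$ with $x_\alpha\rest J_k=g\rest J_k$ --- and (ii)~$x_\alpha\rest I^\alpha_k=s^\alpha_k$ for infinitely many $k$. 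Since every element of $\can$ has order $2$, the set $G_{\alpha+1}:=G_\alpha\cup(G_\alpha+_2 x_\alpha)$ is a subgroup, and (i) keeps it inside $N$. The resulting $G:=\bigcup_{\alpha<\lambda}G_\alpha$ is null (being $\subseteq N$) and non-meager (by Bartoszy\'nski, witnessed by the $x_\alpha$'s).

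The main obstacle is producing $x_\alpha$ at each stage of the recursion in ZFC. One high-level approach is Baire-categorical: each $N-g=\{x:x\rest J_k=g\rest J_k\text{ i.o.}\}$ is comeager by the same density argument as for $N$, and so is the set witnessing (ii); provided $|G_\alpha|<\cof(\cM)$ (arrangeable by doing the recursion of length $\cof(\cM)$ against a cofinal family of meager codes), the intersection is comeager, hence non-empty. More explicitly, a greedy diagonal construction should work: enumerate the pending requirements ``produce one more $J_k$ with $x_\alpha\rest J_k=g_i\rest J_k$'' and ``produce one more $k'$ with $x_\alpha\rest I^\alpha_{k'}=s^\alpha_{k'}$'', and at each internal step pick a fresh $J_k$ or $I^\alpha_{k'}$ disjoint from all previous commitments. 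The technical heart is checking that this handles arbitrary interleavings of $(J_k)_k$ and $(I^\alpha_{k'})_{k'}$, in particular when $I^\alpha_{k'}$ straddles several $J_k$'s; this is where I expect the real work to lie.

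For part (2), I would let $\widetilde G\leq(\mbR,+)$ be the subgroup generated by $\bE(G\cap D^\infty_0)$ together with $\bbZ$. Proposition~\ref{expanding}(1) ensures $\bE$ preserves both measure and category, so $\bE(G\cap D^\infty_0)$ is null and non-meager in $[0,1)$, which gives $\widetilde G$ non-meager. For nullity, parts (2) and (3) of Proposition~\ref{expanding} say that $\mbR$-addition and negation correspond, via $\bE^{-1}$, to operations on $\can$ preserving ``constant on $[n,m)$'' with at most one-bit shrinkage per block per operation. Consequently, provided the $|J_k|$'s used in constructing $G$ grow sufficiently fast, any finite $\mbR$-combination of elements of $\bE(G\cap D^\infty_0)$ lies in the $\bE$-image of a set with the ``infinitely many constant blocks on slightly shrunken $J_k$'s'' property, which remains null by the same Borel--Cantelli estimate. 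Hence $\widetilde G$ is null, completing the transfer.
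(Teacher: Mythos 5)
Your overall architecture (a null ``frame'' $N$ of sequences with infinitely many constant blocks, a subgroup inside it, transfer to $\mbR$ via $\bE$) matches the paper's, but the step you yourself flag as ``the technical heart'' is where the argument actually breaks, and the paper's solution is a different idea that your outline is missing. Concretely, your successor step needs a single $x_\alpha$ lying in $\bigcap_{g\in G_\alpha}(N+_2 g)$ and in the complement of the $\alpha$-th meager set. Each $N+_2 g$ is comeager, but an intersection of $|G_\alpha|$ comeager sets is only guaranteed non-empty when $|G_\alpha|<\add(\cM)$, whereas to make $G$ non-meager the recursion must run for at least $\cof(\cM)$ steps; since $\add(\cM)<\cof(\cM)$ is consistent (e.g.\ in the Cohen model), the category argument is not a ZFC argument. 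The greedy alternative fares no better: at stages $\alpha\geq\omega_1$ you have uncountably many requirements ``agree with $g$ on infinitely many blocks,'' one for each $g\in G_\alpha$, and a diagonalization over the countably many blocks $J_k$ cannot meet them all unless the elements of $G_\alpha$ have been kept coherent --- for each finite subset of $G_\alpha$ there must be infinitely many blocks on which all of its members are simultaneously constant. Your construction maintains no such invariant, and without one the recursion cannot be continued. The same coherence defect sinks your part (2): Proposition~\ref{expanding}(2) only applies when $x$ and $y$ are constant on a \emph{common} interval, and two elements of your $N$ may have disjoint sets of constant blocks, so finite $\mbR$-combinations need not land in any shrunken version of $N$.

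The paper's fix is to define the group outright, with no recursion: fix a non-principal ultrafilter $D$ on $\omega$ and put $x$ into $H_D$ iff for some $m$ and $j<2$ the set of $k$ with $x\rest[n_k,n_{k+1}-m)\equiv j$ belongs to $D$. Any two members then share a $D$-large, hence infinite, set of good blocks; this makes $H_D$ a subgroup in one line and also makes the $\bE$-transfer go through (the parameter $m$ absorbs the one-bit-per-addition shrinkage, so $\bE[H_D\cap D^\infty_0]+\bbZ$ is already a group, with no need to pass to a generated subgroup). Non-meagerness is a short direct argument: given a dense $\Pi^0_2$ set $W$ coded by blocks and a target $f$, one builds two candidates $x_A,x_B\in W$ supported on complementary halves of the blocks, and the ultrafilter certifies that at least one of them lies in $H_D$. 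If you want to salvage a recursion-style proof you would need to thread a filter (or some other finite-intersection structure) through all stages, at which point you have essentially rediscovered $H_D$.
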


\begin{proof}
(1)\qquad For $k\in\omega$ let $n_k=\frac{1}{2}k(k+1)$ and let $D$ be a
non-principal  ultrafilter on $\omega$. Define  
\[H_D=\Big\{x\in \can:\big(\exists m<\omega\big)\big(\exists j<2\big)
\big(\big\{k>m: x\rest [n_k,n_{k+1}-m)\equiv j\big\}\in D \big)
\Big\}.\]   
\begin{enumerate}
\item[(i)] $H_D$ is a subgroup of $(\can,+_2)$.
\end{enumerate}
Why? Suppose that $x_0,x_1\in H_D$ and let $m_\ell<\omega$ and $j_\ell<2$ be 
such that 
\[A_\ell\stackrel{\rm def}{=}\big\{k>m_\ell: x_\ell\rest [n_k,n_{k+1} 
-m_\ell)\equiv j_\ell\big\}\in D.\] 
Let $m=\max(m_0,m_1)$ and $j=j_0-_2 j_1$. Then  $A_0\cap A_1\in D$ and for
each $k\in A_0\cap A_1$ we have $(x_0-_2 x_2)\rest [n_k,n_{k+1}-m)\equiv
j$. Hence $x_0-_2 x_1\in H_D$.   

\begin{enumerate}
\item[(ii)] $H_D\in\cN$.
\end{enumerate}  
Why? For each $m<k<\omega$ and $j<2$ we have
\[\lambda(\{x\in\can: x\rest [n_k,n_{k+1}-m)\equiv j\})=2^{m-(k+1)}\]  
and therefore for each $m<\omega$ and $j<2$
\[\lambda(\{x\in\can: (\exists ^\infty k)(x\rest [n_k,n_{k+1}-m)\equiv 
j)\})=0.\]  
Now note that $H_D\subseteq \bigcup\limits_{m<\omega}\bigcup\limits_{j<2}
\big\{x\in\can: (\exists ^\infty  k)(x\rest [n_k,n_{k+1}-m)\equiv
j)\big\}$. 

\begin{enumerate}
\item[(iii)] $H_D\notin \cM$.
\end{enumerate}
Why? Suppose that $W$ is a dense $\Pi^0_2$ subset of $\can$. Then we may choose
an increasing sequence $\langle k_i:i\in\omega\rangle$ and a function $f\in \can$ 
such that 
\[\Big\{x\in\can: \big(\exists^\infty i\big)\big(x\rest [n_{k_i},n_{k_{i+1}})=
f\rest [n_{k_i},n_{k_{i+1}})\big)\Big\}\subseteq W.\]
Let $A=\bigcup \{[k_{2i},k_{2i+1}):i\in\omega\}$ and 
$B=\bigcup \{[k_{2i+1},k_{2i+2}):i\in\omega\}$. Then either $A\in D$ or $B\in D$. 
Let $x_A,x_B\in\can$ be such that, for each $i\in \omega$,
\[\begin{array}{l}
x_A\rest [n_{k_{2i}},n_{k_{2i+1}})\equiv 0,\quad 
x_A\rest [n_{k_{2i+1}},n_{k_{2i+2}})=f\rest n_{k_{2i+1}},n_{k_{2i+2}})\quad 
\mbox{ and}\\
x_B\rest [n_{k_{2i+1}},n_{k_{2i+2}})\equiv 0,\quad 
x_B\rest [n_{k_{2i}},n_{k_{2i+1}})=f\rest n_{k_{2i}},n_{k_{2i+1}}).
\end{array}\]
Then $x_A,x_B\in W$ and either $x_A\in H_D$ or $x_B\in H_D$. Consequently, $W\cap 
H_D\neq\emptyset$.
\medskip

\noindent (2)\qquad Consider $H_D^*=\bE[H_D\cap D^\infty_0]+\bbZ$. It
follows from \ref{expanding}(1) that $H_D^*$ is a Lebesgue null meager
subset of $\mbR$. We will show that it is a subgroup of $(\mbR,+)$.

Suppose that $x_0,x_1\in H_D\cap D^\infty_0$ and $L_0,L_1\in\bbZ$ and we
will argue that $(\bE(x_0)+L_0)+(\bE(x_1)+L_1)\in H_D^*$. Let $m_\ell<\omega$
be  such that   
\[A_\ell\stackrel{\rm def}{=}\big\{k>m_\ell: x_\ell\rest [n_k,n_{k+1} 
-m_\ell)\mbox{ is constant }\big\}\in D\]  
and let $m=\max(m_0,m_1)+1$. Choose $y\in D^\infty_0$ and $M\in \{0,1\}$
such that $\bE(x_0)+\bE(x_1)=\bE(y)+M$. It follows from \ref{expanding}(2) that
for every $k\in A_0\cap A_1$, $k>m$, we have that $y\rest [n_k,n_{k+1} -m)$
is constant and since $A_0\cap A_1\in D$ we conclude $y\in
H_D$. Consequently, $(\bE(x_0)+L_0)+(\bE(x_1)+L_1)=\bE(y)+(M+L_0+L_1)\in
H_D^*$. 

Now assume that $x\in H_D\cap D^\infty_0$, $L\in\bbZ$ and we
will argue that $-(\bE(x)+L)\in H_D^*$. If $\bE(x)=0$ then the assertion is 
clear, so assume also $\bE(x)>0$. Let $m<\omega$ be  such that    
\[A\stackrel{\rm def}{=}\big\{k>m: x\rest [n_k,n_{k+1} -m)\mbox{ is constant
} \big\}\in D.\]  
Choose $y\in D^\infty_0$ such that $1-\bE(x)=\bE(y)$. It follows from
\ref{expanding}(3) that for every $k\in A$, $k>m+1$, we have that $y\rest
[n_k,n_{k+1} -(m+1))$ is constant. Consequently, $y\in H_D$ and 
$-(\bE(x)+L)=\bE(y)-1-L\in H_D^*$. 
\end{proof}

\begin{remark}
A somewhat simpler non--meager null subgroup of $(\can,+_2)$ is
\[H_D^-=\Big\{x\in \can:\big\{k\in\omega: x\rest [n_k,n_{k+1})\equiv 0\big\}
\in D\Big\}.\]  
The group $H_D$, however, was necessary for our construction of
$H^*_D<\mbR$.   
\end{remark}

\begin{corollary}
There exists no translation invariant Borel hull for the null ideal on
$\can$ and/or on $\mbR$.
\end{corollary}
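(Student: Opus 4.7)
The plan is to combine Theorem \ref{firstres} with the implication spelled out at the beginning of the introduction (a result from \cite{FRSh:1031}): the existence of a non--meager Lebesgue null subgroup of $(\mbR,+)$ (resp.\ of $(\can,+_2)$) rules out any translation invariant Borel hull operation on the null ideal there. Since Theorem \ref{firstres} furnishes precisely such a subgroup in each setting, the corollary is a direct harvest of the two ingredients.

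To spell out the reduction: suppose toward a contradiction that $\psi:\cN\to\mathrm{Borel}$ is a translation invariant Borel hull for the null ideal on $\mbR$. Let $H^*\leq(\mbR,+)$ be the non--meager null subgroup produced in Theorem \ref{firstres}(2) and put $B=\psi(H^*)$. By the defining properties of $\psi$ the set $B$ is Borel, null, and contains $H^*$; hence $B$ is Borel, null, and \emph{non--meager}. Translation invariance together with $H^*+h=H^*$ for every $h\in H^*$ gives $B+h=\psi(H^*+h)=\psi(H^*)=B$, so $B$ is $H^*$--invariant. The reduction of \cite{FRSh:1031} now turns this combination -- a Borel null $H^*$--invariant set whose underlying group is non--meager -- into the desired contradiction. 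The $\can$ version is verbatim, using Theorem \ref{firstres}(1) in place of \ref{firstres}(2).

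Because both ingredients are already in hand -- Theorem \ref{firstres} is the new content and the implication ``non--meager null subgroup $\Rightarrow$ no translation invariant Borel hull'' is quoted -- there is essentially no obstacle. The only care needed is to state the corollary uniformly for $\can$ and $\mbR$, but since \ref{firstres} supplies the subgroup in both ambient groups the same argument runs in both.
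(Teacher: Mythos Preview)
Your proposal is correct and matches the paper's approach: the corollary is stated without proof and is meant to follow immediately from Theorem~\ref{firstres} together with the implication quoted in the introduction (from \cite{FRSh:1031}) that a non--meager null subgroup precludes a translation invariant Borel hull for~$\cN$. Your partial unpacking of the reduction is fine but not required --- the paper simply invokes the cited implication.
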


\section{Some technicalities}
Here we prepare the ground for our consistency results. 

\subsection{Moving from $\mbR$ to $\can$}
First, let us remind connections between the addition in $\mbR$ and
that of $\can$ (via the binary expansion $\bE$, see \ref{binexp}). 

\begin{definition}
  Let $J=[m,n)$ be a non-empty interval of integers and $c\in\{0,1\}$. For
  sequences $\rho,\sigma \in {}^J2$ we define $\rho\circledast_c \sigma$ as
  the unique $\eta\in {}^J 2$ such that
\[\Big(\sum_{i=m}^{n-1}\rho(i)2^{-(i+1)}+\sum_{i=m}^{n-1}
\sigma(i)2^{-(i+1)} +c\cdot 2^{-n}\Big) - \sum_{i=m}^{n-1} \eta(i)2^{-(i+1)}
\in \{0,2^{-m}\}.\]
For notational convenience we also set $\rho\circledast_2\sigma= \rho +_2
\sigma$ (coordinate-wise addition modulo 2). 
\end{definition}

The operation $\circledast_c$ is defined on the set ${}^J2$, so it does
depend on $J$. We may, however, abuse notation and use that same
symbol $\circledast_c$ for various $J$. 

\begin{observation}
\label{carrying}
Let $m,\ell,n$ be integers such that $m<\ell<n$ and let $J=[m,n)$.
\begin{enumerate}
\item For each $c\in \{0,2\}$, $({}^J2,\circledast_c)$ is an Abelian group. 
\item If $\rho,\sigma\in {}^J 2$ and $\rho(\ell)=\sigma(\ell)$,
  then $(\rho\circledast_0\sigma)\rest [m,\ell)=
  (\rho\circledast_1\sigma)\rest [m,\ell)$.
\item If $\rho,\sigma\in {}^J 2$ and $(\rho \circledast_0
  \sigma)(\ell)=0$, then $(\rho\circledast_0\sigma)\rest [m,\ell)=  
  (\rho\circledast_1\sigma)\rest [m,\ell)$.
\item Suppose that  $r,s\in [0,1)$, $\rho,\sigma,\eta\in D^\infty_0$,
  $\bE(\rho)=r$, $\bE(\sigma)=s$ and $\bE(\eta)=r+s$ modulo 1. Then  
  \begin{itemize}
  \item if $\sum\limits_{i\geq n} \big((\rho(i)+\sigma(i))/2^{i+1}
    \big)\geq 2^{-n}$, then $\eta\rest J=  (\rho\rest J)\circledast_1
    (\sigma\rest J)$;  
  \item if $\sum\limits_{i\geq n} \big((\rho(i)+\sigma(i))/2^{i+1}
    \big)< 2^{-n}$, then $\eta\rest J= (\rho\rest J)\circledast_0
    (\sigma\rest J)$.  
  \end{itemize}
\end{enumerate}
\end{observation}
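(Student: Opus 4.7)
The plan is to view the operation $\circledast_c$ (for $c\in\{0,1\}$) as ordinary finite binary addition performed on the bits indexed by $[m,n)$, with an initial carry of value $c$ entering at position $n$ (from the right) and propagating leftward through positions $n-1,n-2,\ldots,m$, the $\{0,2^{-m}\}$ discrepancy in the definition accounting for the overall reduction modulo $2^{-m}$. Under this interpretation, every claim reduces to a short carry-propagation calculation, and $\circledast_c$ at each bit $\ell$ is simply $(\rho(\ell)+\sigma(\ell)+c_\ell) \bmod 2$ with carry-out $\lfloor(\rho(\ell)+\sigma(\ell)+c_\ell)/2\rfloor$, where $c_\ell$ is the carry into position $\ell$.

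For (1): identifying ${}^J2$ with $\{0,1,\ldots,2^{n-m}-1\}$ via $\rho\mapsto\sum_i\rho(i)2^{n-1-i}$ conjugates $\circledast_0$ to addition modulo $2^{n-m}$, which is an abelian group; and $\circledast_2$ is coordinatewise XOR, i.e.\ $(\mathbb{Z}/2\mathbb{Z})^{n-m}$. For (2): if $\rho(\ell)=\sigma(\ell)$, then $\rho(\ell)+\sigma(\ell)\in\{0,2\}$, so for any carry-in $c_\ell\in\{0,1\}$ the carry-out is exactly $\rho(\ell)$, independent of $c_\ell$. Hence the $\circledast_0$ and $\circledast_1$ computations, which differ only in their initial carry at position $n$, agree on their carry-out from position $\ell$, and therefore on all their outputs at positions $<\ell$.

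For (3): by induction on positions $n,n-1,\ldots,\ell+1$, using monotonicity of the single-step carry-out in its carry-in, the carry-ins $c^0_\ell$ and $c^1_\ell$ at position $\ell$ in the two computations satisfy $c^0_\ell\le c^1_\ell$. If they are equal we are immediately done; otherwise $c^0_\ell=0<1=c^1_\ell$, and the hypothesis $(\rho\circledast_0\sigma)(\ell)=(\rho(\ell)+\sigma(\ell)+0)\bmod 2=0$ forces $\rho(\ell)=\sigma(\ell)$, at which point the argument of (2) gives equal carry-outs from position $\ell$, and hence identical outputs on $[m,\ell)$.

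For (4): split $\bE(\rho)+\bE(\sigma)$ into its contribution $A$ from bits in $[0,n)$ and the tail $B=\sum_{i\ge n}(\rho(i)+\sigma(i))2^{-(i+1)}$. The tail delivers a carry into position $n-1$ of value $0$ or $2^{-n}$ according as $B<2^{-n}$ or $B\ge 2^{-n}$; meanwhile $\bE(\eta)=\bE(\rho)+\bE(\sigma)\pmod 1$ says $\bE(\eta)$ equals $A+B$ up to an integer shift, which on the window $J$ is exactly the $\{0,2^{-m}\}$ adjustment from the definition of $\circledast_c$. Matching the two descriptions identifies $\eta\rest J$ with $(\rho\rest J)\circledast_c(\sigma\rest J)$ for the appropriate $c\in\{0,1\}$. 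The only real obstacle is bookkeeping in (4): keeping the three different ``mod/carry'' phenomena (the global $\bmod 1$ from $\bE$, the $\bmod 2^{-m}$ hidden in the $\{0,2^{-m}\}$ discrepancy, and the tail carry of magnitude $c\cdot 2^{-n}$) clearly separated; once that is done, the verification is routine.
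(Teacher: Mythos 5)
Your proof is correct, and it is exactly the intended reading: the paper offers no proof of this Observation at all (it is stated as routine), and your interpretation of $\circledast_c$ as finite binary addition on $[m,n)$ with carry-in $c$ at position $n-1$ and reduction modulo $2^{-m}$ at the top is precisely what the definition encodes. The only point worth making explicit in part (4) is that the digit string produced by infinite long addition of $\rho$ and $\sigma$ cannot terminate in all $1$'s (that would force $\rho(j)=\sigma(j)=1$ for all large $j$, contradicting $\rho,\sigma\in D^\infty_0$), so it really is the canonical expansion of $r+s$ modulo $1$ and hence coincides with $\eta$; with that remark your carry bookkeeping closes the argument.
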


\subsection{The combinatorial heart of our forcing arguments} 
For this subsection we fix a strictly increasing sequence $\bar{n}=\langle
n_j:j<\omega\rangle\subseteq \omega$. 

\begin{definition}
\label{translation}
We define $\bar{m}[\bar{n}]=\langle
m_i:i<\omega\rangle$, $\bar{N}[\bar{n}]=\langle N(i):i<\omega\rangle$, 
$\bar{J}[\bar{n}]=\langle J_i:i<\omega\rangle$, $\bar{H}[\bar{n}]=\langle
H_i: i<\omega\rangle$, $\pi[\bar{n}]=\langle \pi_i:i<\omega\rangle$ and
$\bF[\bar{n}]$ as follows. 

We set $m_0=0$ and then inductively for  $i<\omega$ we let 
\begin{enumerate}
\item[$(*)_1$] $m_{i+1}=2^{n_{m_i}+1081}$. 
\end{enumerate}
Next, for $i<\omega$,  
\begin{enumerate}
\item[$(*)_2$] $N(i)=n_{m_i}$, $J_i=\big[N(2^i),N(2^{i+1})\big)$, and 
\item[$(*)_3$] $H_i=\big\{a\subseteq {}^{J_i}2:(1-2^{-N(2^i)})\cdot 2^{|J_i|}\leq |a|
  \big\}$.
\end{enumerate}
We also set  $\pi_i:|H_i|\longrightarrow H_i$ to be the $\prec^*$--first
bijection from $|H_i|$ onto $H_i$. Finally, for
$\eta\in\prod\limits_{m<\omega} (m+1)$ we let  
\begin{enumerate}
\item[$(*)_4$] $\bF_0[\bar{n}](\eta)=\big\{x\in\can: \big(\forall i<\omega
  \big) \big(x\rest J_i\in \pi_i(\eta(|H_i|-1)) \big) \big\}$ and\\ 
$\bF[\bar{n}](\eta)=\big\{x\in\can: \big(\forall^\infty i<\omega \big)
\big(x \rest J_i\in \pi_i(\eta(|H_i|-1)) \big) \big\}.$ 
\end{enumerate}
\end{definition}

\begin{lemma}
\label{itspos}
For every $\eta\in\prod\limits_{m<\omega} (m+1)$, $\bF_0[\bar{n}](\eta)
\subseteq \can$ is a closed set of positive Lebesgue measure, and
$\bF[\bar{n}](\eta)$ is a $\Sigma^0_2$ set of Lebesgue measure 1.
\end{lemma}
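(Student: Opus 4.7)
The plan is to unravel the definitions and reduce the statement to a clean product-measure calculation. For fixed $\eta$, set
\[a_i = \pi_i(\eta(|H_i|-1))\in H_i\qquad \text{for each } i<\omega,\]
noting that $\eta(|H_i|-1)\in\{0,\dots,|H_i|-1\}$ since $\eta\in\prod_m(m+1)$, so $a_i$ really is well-defined and belongs to $H_i$. By the definition of $H_i$ we have $|a_i|\geq (1-2^{-N(2^i)})\cdot 2^{|J_i|}$. Then by construction
\[\bF_0[\bar{n}](\eta)=\bigcap_{i<\omega}\{x\in\can:x\rest J_i\in a_i\},\qquad
\bF[\bar{n}](\eta)=\bigcup_{k<\omega}\bigcap_{i\geq k}\{x\in\can:x\rest J_i\in a_i\}.\]
Each set $\{x:x\rest J_i\in a_i\}$ is clopen (it depends only on the finitely many coordinates in $J_i$), so $\bF_0[\bar{n}](\eta)$ is closed and $\bF[\bar{n}](\eta)$ is $\Sigma^0_2$, giving the descriptive-complexity claims immediately.

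For the measure computation I would exploit the fact that the intervals $J_i=[N(2^i),N(2^{i+1}))$ are pairwise disjoint. Under the product measure $\lambda$ on $\can$ the events $\{x\rest J_i\in a_i\}$ are therefore independent, so
\[\lambda\bigl(\bF_0[\bar{n}](\eta)\bigr)=\prod_{i<\omega}\frac{|a_i|}{2^{|J_i|}}\;\geq\;\prod_{i<\omega}\bigl(1-2^{-N(2^i)}\bigr).\]
For the second part, the complements $\{x\rest J_i\notin a_i\}$ are independent with $\lambda$-measure at most $2^{-N(2^i)}$, so a direct application of the Borel--Cantelli lemma yields
\[\lambda\bigl(\{x\in\can:(\exists^\infty i)(x\rest J_i\notin a_i)\}\bigr)=0,\]
i.e.\ $\lambda(\bF[\bar{n}](\eta))=1$.

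The one quantitative fact both calculations rest on is that $\sum_{i<\omega}2^{-N(2^i)}<\infty$. This is the only nontrivial step, but given the recursion $m_{i+1}=2^{n_{m_i}+1081}$ and the strict monotonicity of $\bar{n}$ it is routine: a short induction shows that $m_i$, hence $N(i)=n_{m_i}$, and a fortiori $N(2^i)$, grow much faster than linearly (indeed $N(2^i)\geq 2^i$ for $i\geq 1$ is already comfortable), so the tail $\sum_i 2^{-N(2^i)}$ converges and the infinite product $\prod_i (1-2^{-N(2^i)})$ is strictly positive. I do not see any serious obstacle; the whole lemma is essentially a bookkeeping exercise once the independence of the blocks $J_i$ is noticed.
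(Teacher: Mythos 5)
Your proof is correct and follows essentially the same route as the paper, whose proof consists of exactly the observations you elaborate: the blocks $J_i$ are pairwise disjoint (hence the events $x\rest J_i\in a_i$ are independent clopen conditions) and $\sum_{i<\omega}2^{-N(2^i)}<1$, which gives positivity of the product for $\bF_0$ and Borel--Cantelli for $\bF$. Nothing to add.
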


\begin{proof}
  Note that $J_i\cap J_j=\emptyset$ and $|H_i|<|H_j|$ for $i<j$, and
  $\sum\limits_{i=0}^\infty 2^{-N(2^i)}<1$.  
\end{proof}

\begin{lemma}
\label{combheart}
Let $i<\omega$, $c\in\{0,2\}$ and let $\eta\in {}^{J_i} 2$. Suppose that for
each $\ell<2^i$ and $x<2$ we are given a function $\cZ_\ell^x:
H_i\longrightarrow {}^{J_i} 2$ such that $\cZ_\ell^x(a)\in a$ for each $a\in
H_i$. Then there are $a^0,a^1\in H_i$ such that for every $\ell<2^i$ there
is $k\in [m_{2^i+\ell},m_{2^i+\ell+1})$ satisfying
\[\big( \cZ_\ell^0(a^0)\rest [n_k,n_{k+1}) \big)
\circledast^k_c \big (\cZ_\ell^1(a^1)\rest [n_k,n_{k+1}) \big) = \eta\rest
[n_k,n_{k+1}) ,\]
where $\circledast^k_c$ denotes the operation $\circledast_c$ on
${}^{[n_k,n_{k+1})}2$.   
\end{lemma}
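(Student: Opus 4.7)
The plan is to fix $a^1$ essentially arbitrarily and then build $a^0$ as the intersection of ``good'' sets, one per $\ell < 2^i$. Take $a^1 = {}^{J_i}2$ (trivially in $H_i$) and set $v_\ell = \cZ^1_\ell(a^1)$ for each $\ell < 2^i$. Writing $K_\ell = [m_{2^i+\ell}, m_{2^i+\ell+1})$, the subintervals $\{[n_k, n_{k+1}) : k \in K_\ell\}$ are pairwise disjoint and their union, as $\ell$ varies, partitions $J_i$, so the problem factorises coordinate-by-coordinate on $J_i$.

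The key input is Observation \ref{carrying}(1): for each $k \in K_\ell$, $({}^{[n_k, n_{k+1})}2, \circledast^k_c)$ is an Abelian group, so there is a \emph{unique} $u^*_{\ell, k} \in {}^{[n_k, n_{k+1})}2$ with
\[u^*_{\ell,k} \circledast^k_c (v_\ell\rest [n_k, n_{k+1})) = \eta\rest [n_k, n_{k+1}).\]
Define the $\ell$-th ``good set''
\[U_\ell = \{u \in {}^{J_i}2 : (\exists k \in K_\ell)(u\rest [n_k, n_{k+1}) = u^*_{\ell, k})\}.\]
Its complement consists of those $u$ that avoid $u^*_{\ell,k}$ on \emph{every} subinterval indexed by $k \in K_\ell$, and since these subintervals are disjoint and the coordinates outside $K_\ell$'s block are free,
\[|U_\ell^c| = 2^{|J_i|}\prod_{k \in K_\ell}\bigl(1 - 2^{-(n_{k+1}-n_k)}\bigr) \leq 2^{|J_i|-|K_\ell|},\]
using only $n_{k+1}-n_k \geq 1$.

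Finally I set $a^0 = \bigcap_{\ell<2^i} U_\ell$ and verify $a^0 \in H_i$. The recursion $m_{j+1} = 2^{n_{m_j}+1081}$ makes $|K_\ell| = m_{2^i+\ell+1} - m_{2^i+\ell}$ nondecreasing in $\ell$, so $|K_\ell| \geq |K_0| = 2^{N(2^i)+1081} - m_{2^i} \geq 2^{N(2^i)+1080}$, using the trivial bound $m_{2^i} \leq n_{m_{2^i}} = N(2^i)$. The union bound then yields
\[|{}^{J_i}2\setminus a^0| \leq \sum_{\ell<2^i} |U_\ell^c| \leq 2^i \cdot 2^{|J_i|-2^{N(2^i)+1080}} \leq 2^{-N(2^i)}\cdot 2^{|J_i|},\]
so $a^0 \in H_i$. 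Since $\cZ^0_\ell(a^0) \in a^0 \subseteq U_\ell$ for every $\ell$, the required $k \in K_\ell$ is produced by the defining property of $U_\ell$. The only computation of substance is the lower bound on $|K_0|$; the doubly-exponential growth of the $m_j$'s with the constant $1081$ built in leaves enormous slack for the union bound to close, so this is not really an obstacle at all---everything hinges on noticing that $\circledast^k_c$ is a group, which cuts each $U_\ell^c$ down by an exponential factor.
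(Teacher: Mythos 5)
Your proposal has a fatal inequality reversal at its core. The identity $|U_\ell^c| = 2^{|J_i|}\prod_{k \in K_\ell}\bigl(1 - 2^{-(n_{k+1}-n_k)}\bigr)$ is correct, but the bound $\prod_{k \in K_\ell}\bigl(1 - 2^{-(n_{k+1}-n_k)}\bigr)\leq 2^{-|K_\ell|}$ is backwards: since $n_{k+1}-n_k\geq 1$, each factor satisfies $1-2^{-(n_{k+1}-n_k)}\geq \frac12$, so the product is bounded \emph{below} by $2^{-|K_\ell|}$, and it equals $2^{-|K_\ell|}$ only when every gap is $1$. In general the product is at least $\exp\bigl(-2\sum_{k\in K_\ell}2^{-(n_{k+1}-n_k)}\bigr)$, which is close to $1$ whenever the gaps grow; e.g.\ for $n_j=3^j$ one gets $\sum_k 2^{-(n_{k+1}-n_k)}<1$, hence $|U_\ell|<\frac12\cdot 2^{|J_i|}$ and your $a^0=\bigcap_\ell U_\ell$ is nowhere near the threshold $(1-2^{-N(2^i)})\cdot 2^{|J_i|}$ required for membership in $H_i$. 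The sequence $\bar n$ is an arbitrary strictly increasing sequence fixed at the start of the subsection (in the application it comes from a meager $F_\sigma$ cover), so its gaps cannot be assumed small, and no amount of slack from the constant $1081$ helps: the event ``$u$ agrees with a prescribed pattern on the block $[n_k,n_{k+1})$'' has probability $2^{-(n_{k+1}-n_k)}$, so the set of $u$ realizing it on \emph{some} $k\in K_\ell$ is typically a tiny fraction of ${}^{J_i}2$, not a co-small one.

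This is not a repairable computation but a flaw in the strategy: freezing $a^1$ and demanding one large $a^0$ all of whose elements hit a single prescribed pattern somewhere cannot work when blocks are long. The paper instead lets \emph{both} sides vary: using the analogue of your observation that small families of selector values can be avoided (Claim \ref{cl1}), it builds long sequences $\langle a^0_j\rangle,\langle a^1_j\rangle$ in $H_i$ on which each $\cZ_\ell^x$ is injective, and then runs a pigeonhole refinement (Claims \ref{cl2} and \ref{cl3}) producing, for each $\ell$, index sets $X^*,Y^*$ and a block $k_\ell$ on which the restricted selector values from each side realize \emph{more than half} of ${}^{[n_{k_\ell},n_{k_\ell+1})}2$; the group structure of $\circledast_c$ (the one ingredient your argument shares with the paper's) then forces the two halves to match against $\eta$. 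You would need to replace your union-bound construction of $a^0$ with a two-sided counting argument of this kind.
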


\begin{proof}
We start the proof with the following Claim.

\begin{claim}
\label{cl1}
If $\cA\subseteq H_i$, $|\cA|\leq 2^{|J_i|-N(2^i)-i}$ and $x<2$, then there
is $b\in H_i$ such that $\cZ_\ell^x(b)\notin \{\cZ_\ell^x(a): a\in\cA\}$ for
each $\ell<2^i$. 
\end{claim}

\begin{proof}[Proof of the Claim]
  Note that $|\{\cZ_\ell^x(a):\ell<2^i\ \&\ a\in\cA\}|\leq 2^i\cdot 
  2^{|J_i|-N(2^i)-i}=2^{|J_i|-N(2^i)}$, so letting $b={}^{J_i}2\setminus
  \{\cZ_\ell^x(a): \ell<2^i\ \&\ a\in \cA\}$ we have $b\in H_i$. Since 
  $\cZ_\ell^x(b)\in b$ we see that $b$ is as required in the claim.   
\end{proof}

It follows from Claim \ref{cl1} that we may pick sequences $\langle a_j^0: 
j<j^*\rangle\subseteq H_i$ and $\langle a_j^1:j<j^*\rangle\subseteq H_i$ 
with $\cZ_\ell^x(a^x_{j_1})\neq \cZ_\ell^x(a^x_{j_2})$ for $j_1<j_2<j^*$, 
$\ell<2^i$, $x<2$ and such that $j^*>2^{|J_i|-N(2^i)-i}$. Now,
by induction on $\ell<2^i$, we choose sets $X_\ell,Y_\ell\subseteq j^*$ and
integers $k_\ell\in [m_{2^i+\ell},m_{2^i+\ell+1})$ such that the following
demands are satisfied.
\begin{enumerate}
\item[(i)] $X_{\ell+1}\subseteq X_\ell\subseteq j^*$, $Y_{\ell+1} \subseteq
  Y_\ell \subseteq j^*$,
\item[(ii)] if $j_0\in X_\ell$ and $j_1\in Y_\ell$ then 
\[\big(\cZ_\ell^0(a_{j_0}^0)\rest [n_{k_\ell},n_{k_\ell+1})\big)
\circledast^{k_\ell}_c \big(\cZ_\ell^1(a_{j_1}^1)\rest
[n_{k_\ell},n_{k_\ell+1})\big) =\eta\rest  [n_{k_\ell},n_{k_\ell+1}),\] 
\item[(iii)] $\min\big(|X_\ell|,|Y_\ell|\big)\geq j^*\cdot 2^{N(2^i) -
    N(2^i+\ell+1)-\ell-1}$. 
\end{enumerate}
We stipulate $X_{-1}=Y_{-1}=j^*$ and we assume that $X_{\ell-1},Y_{\ell-1}$
have been already determined (and $\min\big(|X_{\ell-1}|,|Y_{\ell-1}|\big)
\geq j^*\cdot 2^{N(2^i)-N(2^i+\ell)-\ell}$ if $\ell>0$). Let 
\[\begin{array}{l}
X^*=\big\{j\in X_{\ell-1}: |X_{\ell-1}|\cdot 2^{N(2^i+\ell)-N(2^i+
  \ell+1)-1} \leq\\
\ \big|\{j'\in X_{\ell-1}:\cZ_\ell^0(a_{j'}^0)\rest [N(2^i{+} 
\ell), N(2^i{+}\ell{+}1)) = \cZ_\ell^0(a_j^0)\rest
[N(2^i{+}\ell),N(2^i{+}\ell{+}1)) \} \big|\big\},\\
Y^*=\big\{j\in Y_{\ell-1}: |Y_{\ell-1}|\cdot 2^{N(2^i+\ell)-N(2^i+
  \ell+1)-1} \leq\\
\ \big|\{j'\in Y_{\ell-1}:\cZ_\ell^1(a_{j'}^1)\rest [N(2^i{+}
\ell), N(2^i{+}\ell{+}1)) = \cZ_\ell^1(a_j^1)\rest
[N(2^i{+}\ell),N(2^i{+}\ell{+}1)) \} \big|\big\}. 
\end{array}\] 

\begin{claim}
\label{cl2}
$|X^*|\geq \frac{1}{2} |X_{\ell-1}|$ and  $|Y^*|\geq \frac{1}{2} |Y_{\ell-1}|$.
\end{claim}

\begin{proof}[Proof of the Claim]
Assume towards contradiction that $|X^*|<\frac{1}{2}|X_{\ell-1}|$. Then for
some $\nu_0\in {}^{[N(2^i+\ell),N(2^i+\ell+1))} 2$ we have 
\[\begin{array}{r}
\big|\big\{j\in X_{\ell-1}\setminus X^*: \nu_0\subseteq\cZ_\ell^0(a_j^0) \big\}
\big| \geq |X_{\ell-1}\setminus X^*|\cdot 2^{N(2^i+\ell)-N(2^i+\ell+1)}> \\
\frac{1}{2}|X_{\ell-1}|\cdot 2^{N(2^i+\ell)-N(2^i+\ell+1)}.
\end{array}\]
Let $j\in X_{\ell-1}\setminus X^*$ be such that $\nu_0\subseteq
\cZ_\ell^0(a_j^0)$. Then $j\in X^*$, a contradiction.

Similarly for $Y^*$. 
\end{proof}

\begin{claim}
\label{cl3}
For some $k\in [m_{2^i+\ell},m_{2^i+\ell+1})$  we have that both 
$\big|\big\{ \cZ_\ell^0(a_j^0)\rest [n_k,n_{k+1}):j\in X^*\big\}\big|> 
2^{n_{k+1}-n_k-1}$  and $\big|\big\{ \cZ_\ell^1(a_j^1)\rest
[n_k,n_{k+1}):j\in Y^*\big\}\big|> 2^{n_{k+1}-n_k-1}$.
\end{claim}

\begin{proof}[Proof of the Claim]
Let 
\[K^X=\big\{k\in [m_{2^i+\ell},m_{2^i+\ell+1}):   |\{ \cZ_\ell^0(a_j^0)\rest  
[n_k,n_{k+1}):j\in X^*\}|\leq 2^{n_{k+1}-n_k-1}\big\}\]
and  
\[K^Y=\big\{k\in [m_{2^i+\ell},m_{2^i+\ell+1}):   |\{ \cZ_\ell^1(a_j^1)\rest 
[n_k,n_{k+1}):j\in Y^*\}|\leq 2^{n_{k+1}-n_k-1}\big\}.\]
Assume towards contradiction that $|K^X|\geq \frac{1}{2}(m_{2^i+\ell+1}
-m_{2^i+\ell})$. Then 
\[|X^*|=|\{\cZ_\ell^0(a_j^0):j\in X^*\}|\leq 2^{-1/2 (m_{2^i+\ell+1}-
  m_{2^i+\ell})} \cdot 2^{|J_i|}< 2^{|J_i|}\cdot 2^{-4N(2^i+\ell)}.\] 
(Remember \ref{translation}$(*)_1$.)  Hence $|X_{\ell-1}|\leq 2^{|J_i|- 
  4N(2^i+\ell)+1}$. If $\ell=0$ then we get $2^{|J_i|-2N(2^i)}< j^*\leq
2^{|J_i|- 4N(2^i)+1}$, which is impossible.  If $\ell>0$, then by the
inductive hypothesis (iii) we know that $|X_{\ell-1}|\geq j^*\cdot
2^{N(2^i)-N(2^i+\ell)-\ell}>2^{|J_i|-i-N(2^i+\ell)-\ell}$, so
$3N(2^i+\ell)-1<i+\ell$, a clear contradiction. Consequently  $|K^X|<
\frac{1}{2}(m_{2^i+\ell+1} -m_{2^i+\ell})$, and similarly $|K^Y|<
\frac{1}{2}(m_{2^i+\ell+1} -m_{2^i+\ell})$. Pick $k\in
[m_{2^i+\ell}, m_{2^i+\ell+1})$ such that $k\notin K^X\cup K^Y$. 
\end{proof}
Now, let $k_\ell\in [m_{2^i+\ell}, m_{2^i+\ell+1})$ be as given by Claim
\ref{cl3}. Necessarily the sets 
$\big\{\rho\in {}^{[n_{k_\ell},n_{k_\ell+1})} 2: (\exists j\in X^*)(
(\cZ_\ell^0(a_j^0)\rest [n_{k_\ell},n_{k_\ell+1}))\circledast_c^{k_\ell} \rho
= \eta\rest [n_{k_\ell},n_{k_\ell+1}))\big\}$ and
$\big\{\cZ_\ell^1(a_j^1)\rest [n_{k_\ell},n_{k_\ell+1}):j\in Y^*\big\}$
have non-empty intersection. Therefore, we may find $j_X\in X^*$ and $j_Y\in
Y^*$ such that   
\[\big(\cZ_\ell^0(a_{j_X}^0)\rest [n_{k_\ell},n_{k_\ell+1})\big)
\circledast_c^{k_\ell} \big(\cZ_\ell^1(a_{j_Y}^1)\rest
[n_{k_\ell},n_{k_\ell+1})\big) =  \eta\rest [n_{k_\ell},n_{k_\ell+1}).\]  
Set 
\[X_\ell=\big\{j\in X_{\ell-1}: \cZ_\ell^0(a_j^0)\rest [N(2^i+\ell),
N(2^i+\ell+1))= \cZ_\ell^0(a_{j_X}^0)\rest [N(2^i+\ell),
N(2^i+\ell+1))\big\},\]
and 
\[Y_\ell=\big\{j\in Y_{\ell-1}: \cZ_\ell^1(a_j^1)\rest [N(2^i+\ell),
N(2^i+\ell+1))= \cZ_\ell^1(a_{j_Y}^1)\rest [N(2^i+\ell),
N(2^i+\ell+1))\big\}.\]
By the definition of $X^*,Y^*$ and by the inductive hypothesis (iii) we have  
\[|X_\ell|\geq |X_{\ell-1}|\cdot 2^{N(2^i+\ell)-N(2^i+\ell+1) -1} \geq
j^*\cdot 2^{N(2^i)-\ell -N(2^i+\ell+1) -1}\] 
and similarly for $Y_\ell$. Consequently, $X_\ell,Y_\ell$ and $k_\ell$
satisfy the inductive demands (i)--(iii).

After the above construction is completed fix any $j_0\in X_{2^i-1}$,
$j_1\in Y_{2^i-1}$ and consider $a^0=a_{j_0}$ and $a^1=a_{j_1}$. For each 
$\ell<2^i$ we have $j_0\in X_\ell$, $j_1\in Y_\ell$ so 
\[\big(\cZ_\ell^0(a^0)\rest [n_{k_\ell},n_{k_\ell+1})\big)
\circledast^{k_\ell}_c \big(\cZ_\ell^1(a^1)\rest
[n_{k_\ell},n_{k_\ell+1})\big) = \eta\rest [n_{k_\ell},n_{k_\ell+1}).\]  
Hence $a^1,a^2\in H_i$ are as required.  
\end{proof}

\subsection{The $*$--Silver forcing notion}
The consistency result of the next section will be obtained using CS
product of the following forcing notion $\bbS_*$. 

\begin{definition}
  \label{fordef}
  \begin{enumerate}
  \item We define the $*$--Silver forcing notion $\bbS_*$ as follows. \\
{\bf A condition } in $\bbS_*$ is a partial function $p:\dom(p)
\longrightarrow \omega$ such that $\dom(p)\subseteq\omega$ is coinfinite and
$p(m)\leq m$ for each $m\in\dom(p)$.\\
{\bf The order } $\leq=\leq_{\bbS_*}$ of $\bbS_*$ is the inclusion, i.e.,
$p\leq q$ if and only if $p\subseteq q$. 
  \item  For $p\in \bbS_*$ and $1\leq n<\omega$ we let $u(n,p)$ be the 
    set of the first $n$ elements of $\omega\setminus \dom(p)$ (in the
    natural increasing order).  Then for $p,q\in\bbS_*$ we let\\
$p\leq_n q$ if and only if $p\leq q$ and $u(n,q)=u(n,p)$.

We also define $p\leq_0 q$ as equivalent to $p\leq q$.
\item Let $p\in\bbS_*$. We let $S(n,p)$ be the set of all functions $s:
  u(n,p)\longrightarrow \omega$ with the property that $s(m)\leq m$ for all
  $m\in u(n,p)$.
\item We let $\name{\eta}$ to be the canonical $\bbS_*$--name such that  
\[\forces \name{\eta}=\bigcup\{p:p\in\name{G}_{\bbS_*}\}.\]
  \end{enumerate}
\end{definition}

\begin{remark}
  The forcing notion $\bbS_*$ may be represented as a forcing of the type
  $\bbQ_{{\rm w}\infty}^*(K,\Sigma)$ for some finitary creating pair 
  $(K,\Sigma)$ which captures singletons, see Ros{\l}anowski and Shelah
  \cite[Definition 2.1.10]{RoSh:470}. It is a close relative of the Silver
  forcing notion and, in a sense, it lies right above all $\bbS_n$'s studied
  for instance in Ros{\l}anowski \cite{Ro0x} and Ros{\l}anowski and
  Stepr\={a}ns  \cite{RoSt08}.   
\end{remark}

\begin{lemma}
  \label{basiclemma}
  \begin{enumerate}
\item $(\bbS_*,\leq_{\bbS_*})$ is a partial order of size $\con$. If
  $p\in\bbS_*$ and $s\in S(n,p)$ then $p\cup s\in\bbS_*$ is a condition
  stronger than $p$.  
\item $\forces_{\bbS_*} \name{\eta}\in \prod\limits_{m<\omega} (m+1)$ and
  $p\forces_{\bbS_*} p\subseteq\name{\eta}$ (for $p\in\bbS_*$). 
\item If $p\in\bbS_*$ and $1\leq n<\omega$, then the family $\{p\cup s:s\in
  S(n,T)\}$ is an antichain pre-dense above $p$.  
\item The relations $\leq_n$ are partial orders on $\bbS_*$, $p\leq_{n+1} q$
  implies $p\leq_n q$.
\item Assume that $\name{\tau}$ is an $\bbS_*$--name for an ordinal,
  $p\in\bbS_*$, $1\leq n,m<\omega$. Then there is a condition $q\in\bbS_*$
  such that $p\leq_n q$, $\max\big(u(n+1,q)\big)>m$ and for all $s\in
  S(n,q)$ the condition $q\cup s$ decides the value of $\name{\tau}$.
\item The forcing notion $\bbS_*$ satisfies Axiom A of Baumgartner \cite[\S
  7]{B3} as witnessed by the orders $\leq_n$, it is $\baire$--bounding and,
  moreover, every meager subset of $\can$ in an extension by $\bbS_*$ is
  included in a $\Sigma^0_2$ meager set coded in the ground model. 
  \end{enumerate}
\end{lemma}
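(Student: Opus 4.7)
\emph{Parts (1)--(4).} These are bookkeeping. For (1), $p\cup s$ adds only $n$ values to $\dom(p)$ (keeping it coinfinite) and respects the pointwise bound by definition of $S(n,p)$; the size estimate is immediate. Part (2) follows from the density of extensions: for each $m\in\omega$ and $p\in\bbS_*$ with $m\notin\dom(p)$, setting $p(m):=0$ gives a stronger condition, so $\name{\eta}$ is forced to be a total function in $\prod_{m<\omega}(m+1)$. For (3), distinct $s\neq s'\in S(n,p)$ disagree at some coordinate in $u(n,p)$, making $p\cup s$ and $p\cup s'$ incompatible; and given $q\geq p$, setting $s(m):=q(m)$ when $m\in u(n,p)\cap\dom(q)$ and $s(m):=0$ otherwise yields $s\in S(n,p)$ with $q\cup s$ a common extension of $q$ and $p\cup s$. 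Part (4) is immediate from the definition of $u(n,\cdot)$.

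\emph{Part (5).} The key is finiteness: $|S(n,p)|=\prod_{i\in u(n,p)}(i+1)<\infty$. Enumerate $S(n,p)=\{s_0,\dots,s_{N-1}\}$ and recursively construct $p=p_0\leq_n p_1\leq_n\cdots\leq_n p_N$ as follows: given $p_j$ with $u(n,p_j)=u(n,p)$, pick $r_j\geq p_j\cup s_j$ deciding $\name{\tau}$ and set $p_{j+1}:=p_j\cup\bigl(r_j\rest(\omega\setminus u(n,p))\bigr)$. Since $u(n,p)\cap\dom(p_j)=\emptyset$ and $u(n,p)\subseteq\dom(r_j)$ (via $s_j$), one checks that $u(n,p_{j+1})=u(n,p)$ and $p_{j+1}\cup s_j=r_j$ continues to decide $\name{\tau}$, a property preserved under further extensions. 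Finally, extend $p_N$ to $q$ by setting $q(k):=0$ for all $k\in(\max u(n,p),m]\setminus\dom(p_N)$; this preserves $u(n,q)=u(n,p)$, forces $\max u(n+1,q)>m$, and yields $q\cup s$ deciding $\name{\tau}$ for every $s\in S(n,q)=S(n,p)$.

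\emph{Part (6).} Axiom A follows from (3)--(5) together with the observation that the union of any $\leq_n$-fusion sequence $\langle p_n\rangle$ lies in $\bbS_*$, as its $\dom$-complement contains the infinite set $\bigcap_n u(n,p_n)$. The $\baire$-bounding property is obtained by iterating (5): build a fusion $\langle p_n\rangle$ with each $p_n\cup s$ deciding $\name{f}(n)$ for $s\in S(n,p_n)$; the finite set of decided values provides a ground-model upper bound on $\name{f}(n)$.

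The essential and technically involved content of (6) is the meager-covering claim. Given a name $\name{A}$ for a meager subset of $\can$, write $\name{A}\subseteq\bigcup_k[\name{T}_k]$ with $\name{T}_k$ names for nowhere-dense subtrees of $2^{<\omega}$. Using fusion based on (5), one constructs $q\geq p$ such that for every $(k,\sigma)\in\omega\times 2^{<\omega}$ there is a ground-model $\tau^*_{k,\sigma}\supseteq\sigma$ and an integer $n_{k,\sigma}$ with $q\cup s\forces\tau^*_{k,\sigma}\notin\name{T}_k$ for every $s\in S(n_{k,\sigma},q)$; the $\tau^*_{k,\sigma}$ is obtained by processing the finitely many branches $s$ one at a time, each time further extending the current candidate using that $\name{T}_k$ is forced to be a nowhere-dense subtree (so any finite string has a further extension forced out of $\name{T}_k$, and subtree-closure propagates this to every still-longer extension). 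Define $T^*_k:=\{\rho\in 2^{<\omega}:(\exists r\geq q)(r\forces\rho\in\name{T}_k)\}\in V$; then $q\forces\name{T}_k\subseteq T^*_k$, and $T^*_k$ is nowhere dense because $\tau^*_{k,\sigma}\notin T^*_k$ (otherwise some $r\geq q$ would force $\tau^*_{k,\sigma}\in\name{T}_k$, but by (3) such $r$ must be compatible with some $q\cup s$, contradicting $q\cup s\forces \tau^*_{k,\sigma}\notin\name{T}_k$). Hence $M:=\bigcup_k [T^*_k]$ is the desired $\Sigma^0_2$ meager cover in the ground model. The main obstacle is precisely the uniformity of $\tau^*_{k,\sigma}$ across the finitely many branches $S(n_{k,\sigma},q)$, which is exactly what the pure-decision property from (5) affords.
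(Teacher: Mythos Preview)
Your proposal is correct and in fact supplies considerably more detail than the paper does: the paper's entire proof reads ``Straightforward --- the same as for the Silver forcing notion.'' Your arguments for (1)--(6) follow the standard route for Silver-type forcings (finite splitting, pure decision via cycling through the finitely many immediate extensions, fusion), which is exactly what the paper is alluding to.
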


\begin{proof}
Straightforward - the same as for the Silver forcing notion.
\end{proof}

\begin{definition}
\label{product}
Assume $\kappa^{\aleph_0} = \kappa\geq\aleph_2$.
\begin{enumerate}
\item  $\bbS_*(\kappa)$ is the CS product of $\kappa$ many copies of
  $\bbS_*$. Thus\\
{\bf a condition} $p$ in $\bbS_*(\kappa)$ is a function with a countable 
domain $\dom(p)\subseteq \kappa$ and with values in $\bbS_*$, and\\ 
{\bf the order} $\leq$ of $\bbS_*(\kappa)$ is such that\\ 
$p\leq q$ if and only if $\dom(p)\subseteq \dom(q)$ and $(\forall \alpha\in
\dom(p))(p(\alpha)\leq_{\bbS_*} q(\alpha))$. 
\item Suppose that $p\in\bbS_*(\kappa)$ and $F\subseteq \dom(p)$ is a finite
  non-empty set and $\mu:F\longrightarrow\omega\setminus\{0\}$. Let
  $v(F,\mu,p)=\prod\limits_{\alpha\in F} u(\mu(\alpha),p(\alpha))$ and 
  $T(F,\mu,p)=\prod\limits_{\alpha\in F} S(\mu(\alpha),p(\alpha))$. 

  If $\sigma \in T(F,\mu,p)$ then let $p|\sigma$ be the condition
  $q\in\bbS_*(\kappa)$ such that $\dom(q)=\dom(p)$ and
  $q(\alpha)=p(\alpha)\cup\sigma(\alpha)$ for $\alpha\in F$ and
  $q(\alpha)=p(\alpha)$ for $\alpha\in \dom(q)\setminus F$.  

We let $p\leq_{F,\mu} q$ if and only if $p\leq q$ and $v(F,\mu,p)=
v(F,\mu,q)$.   

If $\mu$ is constantly $n$ then we may write $n$ instead of $\mu$.

\item Suppose that $p\in\bbS_*(\kappa)$ and $\name{\bar{\tau}}= \langle 
  \name{\tau}_n: n<\omega\rangle$ is a sequence of names for ordinals. We
  say that {\em $p$ determines $\name{\bar{\tau}}$ relative to $\bar{F}$} if 
  \begin{itemize}
\item $\bar{F}=\langle F_n: n<\omega\rangle$ is a sequence of finite subsets
  of $\dom(p)$, and  
\item $p$ forces a value to $\name{\tau}_0$ and for $1\leq n<\omega$ and
  $\sigma\in T(F_n,n,p)$ the condition $p|\sigma$ decides the value of
  $\name{\tau}_n$.   
  \end{itemize}
\end{enumerate}
\end{definition}

\begin{lemma}
 \label{blprod}
 \begin{enumerate}
\item The forcing notion $\bbS_*(\kappa)$ satisfies $\con^+$--chain
  condition.
\item Suppose that $p\in\bbS_*(\kappa)$, $F\subseteq \dom(p)$ is finite
  non-empty, $\mu:F\longrightarrow \omega\setminus\{0\}$ and $\name{\tau}$
  is a name for an ordinal. Then there is a condition $q\in\bbS_*(\kappa)$ such that
 $p\leq_{F,\mu} q$ and for every $\sigma\in T(F,\mu,q)$ the condition
 $q|\sigma$ decides the value of $\name{\tau}$.  
\item Suppose that $p\in\bbS_*(\kappa)$ and $\name{\bar{\tau}}= \langle 
  \name{\tau}_n: n<\omega\rangle$ is a sequence of $\bbS_*(\kappa)$--names
  for objects from the ground model $\bV$. Then there is a condition $q\geq
  p$ and a $\subseteq$--increasing sequence $\bar{F}=\langle
  F_n:n<\omega\rangle$ of finite subsets of $\dom(q)$ such that $q$
  determines $\name{\bar{\tau}}$ relative to $\bar{F}$.  
\item Assume $p,\name{\bar{\tau}}$ are as in (3) above and $p\forces$
  ``$\name{\bar{\tau}}$ is a sequence of elements of $\pfs$ with
  disjoint domains''.  Then there are a condition $q\geq p$ and an increasing
  sequence $\bar{F}$ of finite subsets of $\dom(q)$ and a function
  $f=(f_0,f_1):\bigcup\limits_{1\leq n<\omega} T(F_n,n,q) \longrightarrow
  \omega\times\pfs$ such that $q|\sigma\forces\name{\tau}_{f_0(\sigma)} = 
  f_1(\sigma)$ (for all $\sigma\in\dom(f)$) and the elements of
$\langle \dom(f_1(\sigma)):\sigma\in \bigcup_{n<\omega} T(F_n,n,q) \rangle$
are pairwise disjoint. 
 \end{enumerate}
\end{lemma}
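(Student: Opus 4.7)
For (1), I would use a standard $\Delta$-system argument: given $\con^+$ conditions, apply the $\Delta$-system lemma to their countable domains to find $\con^+$ many sharing a common root $R$; since $|\bbS_*|=\con$ and $|R|=\aleph_0$, only $\con$-many restrictions to $R$ exist, so two conditions agree on $R$ and are therefore compatible. For (2), I would enumerate $T(F,\mu,p)=\{\sigma_0,\ldots,\sigma_{N-1}\}$ and build a chain $p=q_0\leq_{F,\mu}q_1\leq_{F,\mu}\cdots\leq_{F,\mu}q_N=q$ sequentially: at step $i$, take any $r_i\geq q_i|\sigma_i$ deciding $\name{\tau}$, and form $q_{i+1}$ by coordinatewise adjoining to $q_i(\alpha)$ only the ``new'' part of $r_i(\alpha)$---i.e., $r_i(\alpha)$ restricted to $\dom(r_i(\alpha))\setminus\dom(q_i(\alpha))$ for $\alpha\notin F$, and to the same set minus also $u(\mu(\alpha),q_i(\alpha))$ for $\alpha\in F$. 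This preserves $v(F,\mu,\cdot)$ (so $q_i\leq_{F,\mu}q_{i+1}$), yields $q_{i+1}|\sigma_i=r_i$, and decisions persist under further extensions.

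For (3), I would run an Axiom~A style fusion built on (2) and Lemma \ref{basiclemma}(6). Inductively construct $\langle (p_n,F_n):n<\omega\rangle$ with $p_0=p$, finite $F_n\subseteq F_{n+1}\subseteq\dom(p_n)$, and $p_n\leq_{F_n,n}p_{n+1}$, using diagonal bookkeeping so that $\bigcup_n F_n=\dom(q)$ where $q(\alpha)=\bigcup_n p_n(\alpha)$ is the fusion limit. At stage $n$, apply the consolidation procedure of (2) so that for every $\sigma\in T(F_n,n,p_{n+1})$ the condition $p_{n+1}|\sigma$ decides $\name{\tau}_n$. The limit $q$ is a valid condition because $u(n,p_m(\alpha))$ stabilizes once $\alpha\in F_m$ and $m\geq n$, leaving infinitely many gaps in each $q(\alpha)$; and since $T(F_n,n,q)=T(F_n,n,p_{n+1})$, the condition $q|\sigma$ extends $p_{n+1}|\sigma$ and preserves the decision on $\name{\tau}_n$.

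The main work is in (4), where the fresh-index selection must be integrated with the fusion so that the $\dom(f_1(\sigma))$'s are pairwise disjoint. Alongside the fusion of (3), I would maintain a finite ``used'' index set $U\subseteq\omega$ and a finite ``occupied'' point set $S\subseteq\omega$, both initially empty. At stage $n$, I enumerate $T(F_n,n,p_n)=\{\sigma_0,\ldots,\sigma_{k-1}\}$ and process each $\sigma_j$ in turn. The key observation is that, since $p\forces$ ``$\{\name{\tau}_m\}$ have pairwise disjoint domains'', the set $\{m:\dom(\name{\tau}_m)\cap S\neq\emptyset\}$ is forced to have size at most $|S|$; hence $p_n|\sigma_j$ forces the existence of some $m\in\omega\setminus U$ with $\dom(\name{\tau}_m)\cap S=\emptyset$, and I can pick an extension $r_j\geq p_n|\sigma_j$ deciding such an $m=:f_0(\sigma_j)$ together with the value $\name{\tau}_m=:f_1(\sigma_j)\in\pfs$. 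Then I adjoin $f_0(\sigma_j)$ to $U$ and $\dom(f_1(\sigma_j))$ to $S$, consolidate the $r_j$'s sequentially into $p_{n+1}$ as in (2), and iterate. The main obstacle---ensuring at every step that a fresh index with a domain disjoint from the current $S$ is available---is resolved precisely by the disjoint-domain hypothesis, which bounds the ``bad'' indices uniformly; by construction, $q=\bigcup_n p_n$ and $f=(f_0,f_1)$ then satisfy all required properties.
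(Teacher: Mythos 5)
Your proposal is correct and follows exactly the standard fusion arguments for countable support products of Silver/Sacks-type forcings, which is precisely what the paper's proof consists of (it simply cites Baumgartner's treatment of the CS Sacks product without giving details). In particular your handling of part (4) -- tracking a finite set of used indices and occupied coordinates and using the forced pairwise disjointness of the domains to guarantee a fresh index at each step -- is the intended argument.
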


\begin{proof}
The same as for the CS product of Silver or Sacks forcing notions, see
e.g.~Baumgartner \cite[\S 1]{Ba85}. 
\end{proof}

\begin{corollary}
\label{corprod}
Assume $\kappa=\kappa^{\aleph_0}\geq \aleph_2$. The forcing notion
$\bbS_*(\kappa)$ is proper and every meager subset of $\can$ in an extension
by $\bbS_*(\kappa)$ is included in a $\Sigma^0_2$ meager set coded in the
ground model.   

\noindent If CH holds, then $\bbS_*(\kappa)$ preserves all cardinals and
cofinalities and $\forces_{\bbS_*(\kappa)} 2^{\aleph_0} =\kappa$.
\end{corollary}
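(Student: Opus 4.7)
The plan is to derive the corollary by stitching together the three preceding lemmas and invoking standard countable-support machinery.

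\textbf{Properness.} Lemma \ref{basiclemma}(6) asserts Axiom~A for $\bbS_*$, and Lemma \ref{blprod}(3) is precisely the fusion statement for $\bbS_*(\kappa)$: any countable sequence of names for ground model objects can be determined, above any condition, along a $\subseteq$-increasing sequence $\bar F$ of finite subsets of $\dom(q)$. Given a countable elementary submodel $N\prec H(\chi)$ with $\bbS_*(\kappa),p\in N$, enumerate a cofinal sequence of names in $N$ for ordinals (coming from the $N$-dense open sets), fuse via Lemma \ref{blprod}(3) inside $N$, and read off an $(N,\bbS_*(\kappa))$-generic strengthening of $p$. This is the standard argument for CS products of Axiom~A forcings and requires no additional combinatorics.

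\textbf{Capturing meager sets.} A meager subset of $\can$ in the extension is a countable union of nowhere dense closed sets, each coded by a sequence in $\pfs$ (for instance, a sequence listing forbidden initial segments). Thus any name $\name{M}$ for a meager set can be replaced by a sequence $\langle \name{\tau}_n:n<\omega\rangle$ of names for ground model Borel codes. Given $p\in\bbS_*(\kappa)$, apply Lemma \ref{blprod}(3) to obtain $q\geq p$ and $\bar F=\langle F_n:n<\omega\rangle$ such that $q$ determines $\name{\bar{\tau}}$ relative to $\bar F$. For each $n$ and each $\sigma\in T(F_n,n,q)$, the condition $q|\sigma$ decides $\name{\tau}_n$; the union over all $n$ and all $\sigma$ of the corresponding nowhere dense closed sets is a $\Sigma^0_2$ meager set $W\in\bV$ that contains every realisation of $\name{M}$ forced by any $q|\sigma$. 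Since by Lemma \ref{basiclemma}(3) the conditions $q|\sigma$ are pre-dense above $q$, the condition $q$ forces $\name{M}\subseteq W$. Density of such $q$'s finishes the step.

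\textbf{Cardinals and the continuum.} Lemma \ref{blprod}(1) gives the $\con^+$-chain condition; under CH this is the $\aleph_2$-c.c., so all cofinalities $\geq\aleph_2$ are preserved, while properness (already established) preserves $\aleph_1$. Hence all cardinals and cofinalities are preserved. For the value of the continuum, the coordinate-wise generics $\langle\name{\eta}_\alpha:\alpha<\kappa\rangle$ are distinct reals, so $\forces 2^{\aleph_0}\geq\kappa$. Conversely, properness implies every name for a real is equivalent to a nice name involving only countably many coordinates and countably many conditions; under CH and $\kappa^{\aleph_0}=\kappa$ there are at most $\kappa^{\aleph_0}=\kappa$ such nice names, giving $\forces 2^{\aleph_0}\leq\kappa$.

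The only genuinely substantive point is the capturing of meager sets; everything else is a routine application of the fusion lemma in the style of Baumgartner's treatment of CS products of Sacks or Silver forcing. The key conceptual step is recognising that a meager set is described by \emph{countably} many Borel codes, so a single invocation of Lemma \ref{blprod}(3) suffices to pin all of them down into the ground model.
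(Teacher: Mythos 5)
The paper gives no proof of this corollary at all --- it is meant to follow from Lemma \ref{basiclemma}(6) and Lemma \ref{blprod} by the standard Baumgartner--style arguments for CS products --- so you are supplying more detail than the authors do. Your treatment of properness, of cardinal preservation via the $\con^+$--chain condition plus properness, and of the value of the continuum is fine. The gap is in the step you yourself single out as the only substantive one. You say that a name $\name{M}$ for a meager set ``can be replaced by a sequence $\langle\name{\tau}_n\rangle$ of names for ground model Borel codes'' and then invoke Lemma \ref{blprod}(3). But the Borel codes of the closed nowhere dense pieces of $\name{M}$ are reals of the extension, not members of $\bV$; Lemma \ref{blprod}(3) applies only to names for objects \emph{from the ground model}, and a single condition $q|\sigma$ with $\sigma$ finite cannot decide an entire new real. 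If the codes were already (forced to be) ground model objects, the covering statement would be trivial for every forcing notion, so this replacement step is essentially assuming the conclusion.

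The repair is standard but it is exactly the missing content. Write $\name{M}\subseteq\bigcup_n\name{C}_n$ with each $\name{C}_n$ closed nowhere dense, and choose names $\name{g}_n:\pfs\longrightarrow\pfs$ such that $s\subseteq\name{g}_n(s)$ and $\forces [\name{g}_n(s)]\cap\name{C}_n=\emptyset$. The individual values $\name{g}_n(s)$ \emph{are} names for ground model objects (elements of $\pfs$), so Lemma \ref{blprod}(3) applies to the countable family $\langle\name{g}_n(s):n<\omega,\ s\in\pfs\rangle$ and yields $q\geq p$ such that each $\name{g}_n(s)$ has only finitely many possible values, decided by the finitely many predense conditions $q|\sigma$. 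At this point you cannot simply take ``the union over all $n$ and all $\sigma$ of the corresponding nowhere dense closed sets'': distinct $q|\sigma$ decide $\name{C}_n$ incompatibly, and a cylinder forced by one $q|\sigma$ to miss $\name{C}_n$ may be forced by another to meet it, so the resulting set need not contain $\name{C}_n$. Instead one composes: set $t^0=s$ and let $t^{j+1}$ be the value of $\name{g}_n(t^j)$ decided by the $(j{+}1)$--st of the finitely many relevant $q|\sigma$; then $g^*_n(s):=t^r$ extends every $t^j$, so each $q|\sigma$ forces $[g^*_n(s)]\cap\name{C}_n=\emptyset$, and by predensity $q$ forces it. Now $D_n=\can\setminus\bigcup_{s\in\pfs}[g^*_n(s)]$ is a ground model closed nowhere dense set with $q\forces\name{C}_n\subseteq D_n$, and $\bigcup_n D_n$ is the required $\Sigma^0_2$ meager superset. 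Without this amalgamation step (or the equivalent argument via the canonical form of meager sets together with $\baire$--bounding), your second paragraph does not go through.
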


\section{Meager non--null}
The goal of this section is to present a model of ZFC in which every 
meager subgroup of $\mbR$ or $\can$ is also Lebesgue null. 

\begin{theorem}
\label{mainthm}
Assume CH. Let $\kappa=\kappa^{\aleph_0}\geq\aleph_2$. Then 
\begin{enumerate}
\item $\forces_{\bbS_*(\kappa)}$`` $2^{\aleph_0}=\kappa$ and  every 
  meager subgroup of $(\can,+_2)$ is Lebesgue null. ''
\item $\forces_{\bbS_*(\kappa)}$`` every meager subgroup of $(\mbR,+)$
  is Lebesgue null. '' 
\end{enumerate}
\end{theorem}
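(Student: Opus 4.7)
The plan is to prove part (1); part (2) follows by the same argument using Proposition \ref{expanding} and Observation \ref{carrying} to translate $+$ on $\mbR$ into the operations $\circledast_0, \circledast_1$ on $\can$, while $\circledast_2$ corresponds to $+_2$. I argue by contradiction: let $\name{G}$ be a $\bbS_*(\kappa)$-name for a meager subgroup of $(\can, +_2)$ and suppose some $p_0$ forces that $\name{G}$ is not Lebesgue null. By Corollary \ref{corprod} I fix a sequence $\langle F_n : n < \omega \rangle \in \bV$ of closed nowhere dense subsets of $\can$ with $F_n \subseteq F_{n+1}$ and $\forces \name{G} \subseteq \bigcup_n F_n$. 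Fix two distinct coordinates $\alpha_0, \alpha_1 \in \kappa$ outside the countable support of $\name{G}$.

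The preparatory step is to choose the parameter $\bar{n}$ of Definition \ref{translation} as a function of $\langle F_n : n < \omega \rangle$. Using the nowhere-density of each $F_n$ together with the largeness condition $(*)_3$ (which forces every $a \in H_i$ to occupy all but a $2^{-N(2^i)}$-fraction of ${}^{J_i} 2$), I arrange $\bar{n}$ so that for every $n$ and all sufficiently large $i$, for each $\ell < 2^i$ and each $k \in [m_{2^i+\ell}, m_{2^i+\ell+1})$ there is a target pattern $\eta \in {}^{J_i} 2$ whose $[n_k, n_{k+1})$-restriction forces the corresponding cylinder to be disjoint from $F_n$.

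Next I run a fusion along $\leq_{F,\mu}$-chains extending $p_0$ (Definition \ref{product}, Lemma \ref{blprod}(3)--(4)), producing $\bbS_*(\kappa)$-names $\name{x}_0, \name{x}_1$ for elements of $\name{G}$. Using the "not null" hypothesis, I arrange that at fusion stage $i$, for each $a \in H_i$ and each $x < 2$ there is a branch of the current condition forcing $\name{x}_x \rest J_i \in a$; defining $\cZ^x_\ell(a)$ to be the corresponding forced $[N(2^i+\ell), N(2^i+\ell+1))$-segment yields selectors that satisfy the hypothesis of Lemma \ref{combheart}. Applying that lemma with $c = 2$ and target $\eta$ chosen (per the preparatory step) to avoid $F_i$ on each $[n_{k_\ell}, n_{k_\ell+1})$ produces $a^0, a^1 \in H_i$ and $k_\ell \in [m_{2^i+\ell}, m_{2^i+\ell+1})$ with $\cZ^0_\ell(a^0) \circledast_2^{k_\ell} \cZ^1_\ell(a^1) = \eta \rest [n_{k_\ell}, n_{k_\ell+1})$. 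I then strengthen the fused condition so that $\name{\eta}_{\alpha_x}(|H_i|-1) = \pi_i^{-1}(a^x)$ for $x < 2$, pinning the generic at $\alpha_x$ to force $\name{x}_x \rest J_i \in a^x$.

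The fusion limit $q^*$ forces $(\name{x}_0 +_2 \name{x}_1) \rest [n_{k_\ell}, n_{k_\ell+1}) = \eta \rest [n_{k_\ell}, n_{k_\ell+1})$ at every stage, hence $\name{x}_0 +_2 \name{x}_1 \notin F_n$ for all $n$ and thus $\name{x}_0 +_2 \name{x}_1 \notin \bigcup_n F_n$; but $\name{x}_0 +_2 \name{x}_1 \in \name{G} \subseteq \bigcup_n F_n$ since $\name{G}$ is a subgroup, a contradiction. The main obstacle will be the fusion bookkeeping in the previous paragraph: at each stage $i$ one must extract branch-by-branch witnesses $\name{x}_x \rest J_i \in a$ for every $a \in H_i$ and $x < 2$, using the "not null" hypothesis and Lemma \ref{blprod}(4) to mine sufficiently diverse witnesses from $\name{G}$, while keeping the partial functions at $\alpha_0, \alpha_1$ free at coordinate $|H_i|-1$ so that $a^0, a^1$ can be written in once Lemma \ref{combheart} returns them.
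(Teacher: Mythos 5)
Your overall architecture matches the paper's: capture the meager cover of the group by a ground-model $\Sigma^0_2$ set in block normal form, fuse two names for group elements while leaving the coordinates $|H_i|-1$ of two generic columns free, invoke Lemma \ref{combheart} to steer the $+_2$-sum onto the target pattern on infinitely many blocks, and contradict closure of the group. However, the step you yourself flag as ``the main obstacle'' is not bookkeeping but the central missing idea, and the tools you cite do not supply it. At stage $i$, for each branch $\tau$ of the fused condition, you need a \emph{selector} $\cZ^x_\tau:H_i\longrightarrow{}^{J_i}2$ with $\cZ^x_\tau(a)\in a$ for \emph{every} $a\in H_i$, each value realized by a branch over the free coordinate $|H_i|-1$. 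Non-nullity of $\name{H}$ only says that $\name{H}$ meets every positive-measure set; it does not say that a single name $\name{x}_x$ for a member of $\name{H}$, once decided branch by branch, realizes all $|H_i|$ of the large sets $a\in H_i$ as its possible $J_i$-restrictions, and Lemma \ref{blprod}(4) (about sequences of finite partial functions with disjoint domains) does not produce such diversity. The paper's mechanism is precisely the set $\bF[\bar{n}](\name{\eta}_\alpha)$ of Definition \ref{translation}: a measure-one set (Lemma \ref{itspos}) \emph{whose definition depends on the generic column $\alpha$}, so that one may choose $\name{\rho}_\alpha$ with $p_1\forces\name{\rho}_\alpha\in\name{H}\cap\bF(\name{\eta}_\alpha)$; then $\name{\rho}_\alpha\rest J_i\in\pi_i(\name{\eta}_\alpha(|H_i|-1))$ is forced for all large $i$, so setting the generic value at $(\alpha,|H_i|-1)$ equal to $\pi_i^{-1}(a)$ automatically forces $\name{\rho}_\alpha\rest J_i\in a$ --- exactly the selector property. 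You describe the payoff of this device (pinning $\name{\eta}_{\alpha_x}(|H_i|-1)$ forces $\name{x}_x\rest J_i\in a^x$) without the hypothesis that makes it true, namely that $\name{x}_x$ was chosen inside $\bF(\name{\eta}_{\alpha_x})$ in the first place.

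Two smaller points. Lemma \ref{combheart} only guarantees \emph{some} $k\in[m_{2^i+\ell},m_{2^i+\ell+1})$ for each $\ell$; you cannot pre-select $k$ and then pick an avoiding pattern for that $k$, so you need a single $f\in\can$ (the block normal form of the meager cover, as in $(*)_0$ of the paper's proof) whose restriction to \emph{every} block $[n_k,n_{k+1})$ serves as the target. And part (2) is not literally ``the same argument'': real addition carries, so the paper restricts to $[0,1/2)$, shrinks the blocks to $[n_j,n_{j+1}-1)$, and imposes $f(n_{j+1}-1)=0$ so that Observation \ref{carrying}(2),(3) make the outcome independent of the unknown carry $c\in\{0,1\}$; a one-sentence reduction glosses over exactly this content. (Also, a name for a subgroup in $\bbS_*(\kappa)$ need not have countable support; the paper runs the construction for all $\alpha<\kappa$ and extracts a compatible isomorphic pair by a $\Delta$-system argument under CH, though fixing $\alpha_0,\alpha_1\notin\dom(p_1)$ and fusing both names simultaneously could be made to work.)
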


\begin{proof}
For $\alpha<\kappa$ let $\name{\eta}_\alpha$ be the canonical  
name for the $\bbS_*$--generic function in $\prod\limits_{m<\omega} (m+1)$
added on the $\alpha^{\rm th}$ coordinate of $\bbS_*(\kappa)$. 

\noindent (1)\qquad Suppose towards contradiction that for some
$p_0\in\bbS_*(\kappa)$ and a  $\bbS_*(\kappa)$--name $\name{H}$ we have   
\[p_0\forces_{\bbS_*(\kappa)} \mbox{`` $\name{H}$ is a meager non--null
  subgroup of $(\can,+_2)$. ''}\]  
By Corollary \ref{corprod} (or, actually, Lemma \ref{blprod}(4)) we may pick
a condition $p_1\geq p_0$, a strictly increasing sequence $\bar{n}=\langle 
n_j: j<\omega\rangle\subseteq \omega$ and a function $f\in\can$ such that 
\begin{enumerate}
\item[$(*)_0$] $p_1\forces_{\bbS_*(\kappa)} \mbox{`` }\name{H}\subseteq
  \big\{ x\in \can: \big (\forall^\infty j<\omega\big)\big(x\rest
  [n_j,n_{j+1})\neq f\rest [n_j,n_{j+1}) \big) \big\}  \mbox{. ''}$
\end{enumerate}
Let $\bar{m}=\bar{m}[\bar{n}]$, $\bar{N}=\bar{N}[\bar{n}]$,
$\bar{J}=\bar{J}[\bar{n}]$, $\bar{H}=\bar{H}[\bar{n}]$, $\pi=\pi[\bar{n}]$
and $\bF=\bF[\bar{n}]$ be as defined in Definition \ref{translation} for
the sequence $\bar{n}$.  Also let $A=\{|H(i)|-1:i<\omega\}$ and 
$r^+\in\bbS_*$ be such that $\dom(r^+)=\omega\setminus A$ and $r^+(k)=0$ for 
$k\in \dom(r^+)$.

Since, by Lemma \ref{itspos}, we have $\forces$``
$\bF(\name{\eta}_\alpha)\subseteq \can$ is a measure one set '', we know
that $p_1\forces_{\bbS_*(\kappa)} \mbox{`` }(\forall\alpha<
\kappa)(\bF(\name{\eta}_\alpha) \cap\name{H}\neq\emptyset )\mbox{
  ''}$. Consequently, for each $\alpha<\kappa$, we may choose a
$\bbS_*(\kappa)$--name $\name{\rho}_\alpha$ for an element of $\can$ such
that  
\[p_1\forces_{\bbS_*(\kappa)}\mbox{`` }\name{\rho}_\alpha\in \name{H}
\ \&\ \name{\rho}_\alpha\in \bF(\name{\eta}_\alpha) \mbox{ ''.}\]
Let us fix $\alpha\in\kappa\setminus \dom(p_1)$ for a moment. Let
$p_1^\alpha\in\bbS_*(\kappa)$ be  a condition such that $\dom(p_1^\alpha)=
\dom(p_1)\cup\{\alpha\}$, $p_1^\alpha(\alpha)=r^+$ and $p_1\subseteq
p_1^\alpha$. Using the standard fusion based argument (like the one applied
in the classical proof of Lemma \ref{blprod}(3) with \ref{blprod}(2) used
repeatedly), we may find a condition $q^\alpha\in\bbS_*(\kappa)$,  a
sequence $\bar{F}=\langle F^\alpha_n: n<\omega \rangle$ of finite sets, a
sequence $\langle \mu^\alpha_n:n<\omega\rangle$ and an integer
$i^\alpha<\omega$ such that the following demands $(*)_1$--$(*)_6$ are
satisfied. 
\begin{enumerate}
\item[$(*)_1$] $q^\alpha\geq p^\alpha_1$, $\dom(q^\alpha)=
  \bigcup\limits_{n<\omega} F^\alpha_n$, $F^\alpha_n\subseteq
  F^\alpha_{n+1}$ and $F^\alpha_0=\{\alpha\}$.
\item[$(*)_2$] $\mu^\alpha_n:F^\alpha_n\longrightarrow\omega$, $\mu^\alpha_n
  (\alpha)=n+1$, $\mu^\alpha_n(\beta)=n$ for $\beta\in F^\alpha_n\setminus\{
  \alpha\}$. 
\item[$(*)_3$] $\min\big(\omega\setminus \dom(q^\alpha(\alpha))\big)
  >|H(i^\alpha)|$ and\\
if $\max\big(u(n+1,q^\alpha(\alpha))\big)=|H(i)|-1$ and $n\geq 1$, then
$|T(F_n,n,q^\alpha)|^2<2^i$,   
\item[$(*)_4$] $q^\alpha\forces \big(\forall i\geq i^\alpha \big) \big(
  \name{\rho}_\alpha\rest J_i\in \pi_i(\name{\eta}_\alpha(|H_i|-1))\big)$,
  and 
\item[$(*)_5$] $q^\alpha$ determines $\name{\rho}_\alpha$ relative to
  $\bar{F}$, moreover 
\item[$(*)_6$] if $\sigma\in T(F^\alpha_n,\mu^\alpha_n,q^\alpha)$ and
  $\max\big(u(n+1,q^\alpha(\alpha))\big)=|H(i)|-1$, then $q^\alpha|\sigma$
  decides the value of $\name{\rho}_\alpha\rest J_i$.
\end{enumerate}

Unfixing $\alpha$ and using a standard $\Delta$--system argument with CH we
may find distinct $\gamma,\delta\in\kappa\setminus \dom(p_1)$ such that 
${\rm otp}(\dom(q^\gamma))={\rm otp}(\dom(q^\delta))$ and if 
$g:\dom(q^\gamma)\longrightarrow \dom(q^\delta)$ is the order preserving
bijection, then the following demands $(*)_7$--$(*)_9$ hold true. 
\begin{enumerate}
\item[$(*)_7$] $i^\gamma=i^\delta$, $g\rest \big(\dom(q^\gamma)\cap
  \dom(q^\delta)\big)$ is the identity, $g(\gamma)=\delta$,
\item[$(*)_8$] $q^\gamma(\beta)=q^\delta(g(\beta))$ for each $\beta\in
  \dom(q^\gamma)$, and $g[F^\gamma_n]=F^\delta_n$, 
\item[$(*)_9$] if $F\subseteq \dom(q^\delta)$ is finite,
  $\mu:F\longrightarrow \omega\setminus\{0\}$, $i<\omega$, 
  $\sigma\in T(F,\mu,q^\delta)$, then 
\[q^\delta|\sigma\forces \name{\rho}_\delta\rest J_i=z\quad\mbox{ if and
  only if }\quad q^\gamma|(\sigma\circ g)\forces \name{\rho}_\gamma \rest
J_i=z.\]  
\end{enumerate}
Clearly $q^*\stackrel{\rm def}{=}q^\gamma\cup q^\delta$ is a condition
stronger than both $q^\gamma$ and $q^\delta$. Let $F^*_n=F^\gamma_n \cup
F^\delta_n$ for $n<\omega$. 

Let $\langle k_\ell:\ell<\omega\rangle$ be the increasing enumeration of 
$\omega\setminus\dom(q^\gamma(\gamma)) = \omega\setminus \dom(q^\delta( 
\delta))$. Note that by the choice of $r^+$ and $p_1^\gamma$, we have
$\omega\setminus \dom(q^\gamma(\gamma))\subseteq A$, so each $k_\ell$ is of
the form $|H(i)|-1$ for some $i$.  Now we will choose conditions
$r_\delta,r_\gamma\in\bbS_*$ so that  
\[\dom(r_\delta)=\dom(r_\gamma)=\dom(q^\delta(\delta))\cup\{k_{2\ell}:
\ell< \omega\},\]
$q^\delta(\delta)\leq r_\delta$, $q^\gamma(\gamma)\leq r_\gamma$ and  the
values of $r_\delta(k_{2\ell}),r_\gamma(k_{2\ell})$ are picked as follows.   

Let $i$ be such that $k_{2\ell}=|H(i)|-1$. If $x\in\{\gamma,\delta\}$ and 
$\sigma\in T(F^x_{2\ell},\mu^x_{2\ell},q^x)$ then $q^x|\sigma$ decides the
value of $\name{\rho}_x\rest J_i$ (by $(*)_6$) and this value belongs to
$\pi_i\big(\sigma(x)(k_{2\ell})\big)$ (by
$(*)_4+(*)_3$). Consequently, for $x\in\{\gamma,\delta\}$ and 
$\tau\in T(F_{2\ell}^*,2\ell,q^*)$ we may define a function
$\cZ_\tau^x:H(i)\longrightarrow {}^{J_i} 2$ so that  
\begin{enumerate}
\item[$(*)_{10}$] {\bf if} $a\in H(i)$, $\mu:F_{2\ell}^* \longrightarrow \omega$
  is such that $\mu(x)=2\ell+1$ and $\mu(\alpha)=2\ell$ for $\alpha\neq x$,
  and $\tau_a\in  T(F_{2\ell}^*,\mu,q^*)$ is such that
  $\tau_a(\alpha)=\tau(\alpha)$  for $\alpha\in F^*_{2\ell}\setminus \{x\}$
  and $\tau_a(x)=\tau(x)\cup\{(k_{2\ell},a)\}$,\\ 
{\bf then} $q^*|\tau_a\forces_{\bbS_*(\kappa)} \name{\rho}_x\rest
J_i=\cZ^x_\tau(a)$ and $\cZ^x_\tau(a)\in a$. 
\end{enumerate}
Since $|T(F_{2\ell}^*,2\ell,q^*)|\leq
|T(F_{2\ell}^\gamma,2\ell,q^\gamma)|^2< 2^i$ (remember $(*)_3$), we may use
Lemma \ref{combheart} to find $r_\delta(k_{2\ell}),r_\gamma(k_{2\ell})\leq
k_{2\ell}$ such that   
\begin{enumerate}
\item[$(*)_{11}$] for every $\tau\in T(F_{2\ell}^*,2\ell,q^*)$ there is
  $k\in  [m_{2^i},m_{2^{i+1}})$ satisfying 
\[\big( \cZ_\tau^\gamma(\pi_i(r_\gamma(k_{2\ell})))\rest [n_k,n_{k+1}) \big)
+_2 \big (\cZ_\tau^\delta(\pi_i(r_\delta(k_{2\ell})))\rest [n_k,n_{k+1})
\big) = f\rest [n_k,n_{k+1}) .\] 
(Remember, $f$ was chosen in $(*)_0$.)  
\end{enumerate}
This completes the definition of $r_\gamma$ and $r_\delta$. Let
$q^+\in\bbS_*(\kappa)$ be such that $\dom(q^+)=\dom(q^*)= \dom(q^\gamma)\cup
\dom(q^\delta)$ and $q^+(\alpha)=q^*(\alpha)$ for
$\alpha\in\dom(q^+)\setminus\{\gamma,\delta\}$ and $q^+(\gamma)=r_\gamma$
and $q^+(\delta)=r_\delta$. Then $q^+$ is a (well defined) condition
stronger than both $q^\gamma$ and $q^\delta$ and such that 
\begin{enumerate}
\item[$(\clubsuit)$] $q^+\forces \Big(\exists^\infty k<\omega\Big)\Big(
  \big( \name{\rho}_\gamma\rest [n_k,n_{k+1}) \big)+_2 \big
  (\name{\rho}_\delta\rest [n_k,n_{k+1}) \big) = f\rest [n_k,n_{k+1}) \Big)$ 
\end{enumerate}
(by $(*)_{10}+(*)_{11}$). Consequently, by $(*)_0$, 
\begin{enumerate}
\item[$(\heartsuit)$] $q^+\forces$`` $\name{\rho}_\gamma,
  \name{\rho}_\delta\in \name{H}$ and $\name{\rho}_\gamma+_2 \name{\rho}_\delta
  \notin\name{H}$ and $(\name{H},+_2)$ is a group'',
\end{enumerate}
a contradiction.
\medskip

\noindent (2)\qquad The proof is a small modification of that for the first
part, so we describe the new points only. Assume towards contradiction that
for some $p_0\in\bbS_*(\kappa)$ and a  $\bbS_*(\kappa)$--name $\name{H}^*$ we
have   
\[p_0\forces_{\bbS_*(\kappa)} \mbox{`` $\name{H}^*$ is a meager non--null 
  subgroup of $(\mbR,+)$ ''.}\]  
Let $\name{H}_0,\name{H}_1$ be $\bbS_*$--names for subsets of $D^\infty_0$
such that 
\[p_0\forces_{\bbS_*(\kappa)}\mbox{`` }\name{H}_0=\bE^{-1}[\name{H}^* \cap
[0,1/2)] \mbox{ and } \name{H}_1=\bE^{-1}[\name{H}^* \cap [0,1)] \mbox{
  ''.}\]   
Necessarily $p_0\forces$`` $\name{H}^*\cap [0,1/2)$ is not null '', so it
follows from \ref{expanding}(1) that  
\[p_0\forces_{\bbS_*(\kappa)}\mbox{`` }\name{H}_0\notin \cN\mbox{ and }
\name{H}_1\in \cM\mbox{ and }\name{H}_0\subseteq \name{H}_1 \mbox{
  ''.}\]   
Clearly we may pick a condition $p_1\geq p_0$, a sequence $\bar{n}=\langle   
n_j: j<\omega\rangle\subseteq \omega$ and a function $f\in\can$ such that 
\begin{enumerate}
\item[$(\oplus)_0$] $n_{j+1}>n_j+j+1$ for each $j$,
\item[$(\oplus)_1$] $f(n_{j+1}-1)=0$ for each $j$, and 
\item[$(\oplus)_2$] $p_1\forces_{\bbS_*(\kappa)}$``$\name{H}_1\subseteq 
  \big\{ x\in \can: \big (\forall^\infty j<\omega\big)\big(x\rest
  [n_j,n_{j+1}{-}1)\neq f\rest [n_j,n_{j+1}{-}1) \big) \big\}$.''\\
(Note: ``$ [n_j,n_{j+1}-1)$'' not ``$ [n_j,n_{j+1})$''.)
\end{enumerate}
Like in part (1), let $\bar{m}=\bar{m}[\bar{n}]$,
$\bar{N}=\bar{N}[\bar{n}]$, $\bar{J}=\bar{J}[\bar{n}]$,
$\bar{H}=\bar{H}[\bar{n}]$, $\pi=\pi[\bar{n}]$ and $\bF=\bF[\bar{n}]$.  Let
$A=\{|H(i)|-1:i<\omega\}$ and  $r^+\in\bbS_*$ be such that
$\dom(r^+)=\omega\setminus A$ and $r^+(k)=0$ for  $k\in \dom(r^+)$. 
Then each $\alpha<\kappa$ fix a $\bbS_*(\kappa)$--name
$\name{\rho}_\alpha$ such that $p_1\forces_{\bbS_*(\kappa)}$``
$\name{\rho}_\alpha\in \name{H}_0\cap \bF(\name{\eta}_\alpha)$ ''. 

Now repeat the arguments of the first part (with $(*)_1$--$(*)_{11}$ there
applied to our $\bar{n},f,\name{\rho}_\alpha$ and $\circledast_0$ here) to
find $\gamma,\delta\in \kappa\dom(p_1)$ such that 
\begin{enumerate}
\item[$(\lozenge)$] $q^+\forces$`` $\big(\exists^\infty k<\omega\big)\big( 
  (\name{\rho}_\gamma\rest [n_k,n_{k+1})) \circledast_0
  (\name{\rho}_\delta\rest [n_k,n_{k+1})) =f\rest [n_k,n_{k+1})\big)$ ''.
\end{enumerate}
Let $G\subseteq \bbS_*(\kappa)$ be a generic over $\bV$ such that $q^+\in
G$ and let us work in $\bV[G]$. Let $\eta\in D^\infty_0$ be such that
$\bE(\name{\rho}_\gamma^G) +\bE(\name{\rho}_\delta^G) =\bE(\eta)$ (remember
$\bE(\name{\rho}_\gamma^G),\bE(\name{\rho}_\delta^G)<1/2$). We know from
$(\lozenge)$ that  there are infinitely many $k<\omega$ satisfying 
\begin{enumerate}
\item[$(\blacklozenge)$]  $(\name{\rho}_\gamma^G\rest [n_k,n_{k+1})) \circledast_0 
  (\name{\rho}_\delta^G\rest [n_k,n_{k+1})) =f\rest [n_k,n_{k+1})$. 
\end{enumerate}
Since $f(n_{k+1}-1)=0$ (see $(\oplus)_1$), we get from \ref{carrying}(3)
that for each $k$ as in $(\blacklozenge)$ we also have  
\[\begin{array}{l}
(\name{\rho}_\gamma^G\rest [n_k,n_{k+1}-1))
  \circledast_0  (\name{\rho}_\delta^G\rest [n_k,n_{k+1}-1))=\\
  (\name{\rho}_\gamma^G\rest [n_k,n_{k+1}-1)) \circledast_1
  (\name{\rho}_\delta^G\rest [n_k,n_{k+1}-1)) =f\rest [n_k,n_{k+1}-1).
\end{array}\] 
Therefore (by \ref{carrying}(4)) for each $k$ satisfying $(\blacklozenge)$
we have $\eta\rest [n_k,n_{k+1}-1)=f \rest [n_k,n_{k+1}-1)$, so 
\[\big(\exists^\infty k<\omega\big)\big(\eta\rest [n_k,n_{k+1}-1)=f \rest
[n_k,n_{k+1}-1)\big).\]
Consequently, by $(\oplus)_2$, we have that $\eta\notin\name{H}_1^G$, i.e.,
$\bE(\eta)\notin (\name{H}^*)^G\cap [0,1)$. This contradicts the fact that 
$\bE(\name{\rho}_\gamma^G),\bE(\name{\rho}_\delta^G) \in (\name{H}^*)^G$,
$\bE(\eta)=\bE(\name{\rho}_\gamma^G)+\bE(\name{\rho}_\delta^G)$ and
$(\name{H}^*)^G$ is a subgroup of $(\mbR,+)$. 
\end{proof}

\begin{remark}
Instead of the CS product of forcing notions $\bbS_*$ we could have used
their CS iteration of length $\omega_2$. Of course, that would restrict the
value of the continuum in the resulting model.
\end{remark}

\section{Problems}
Both theorems \ref{firstres}(1) and \ref{mainthm}(1) can be repeated for
other product groups. We may consider a sequence $\langle
H_n:n<\omega\rangle$ of finite groups and their coordinate-wise
product $H=\prod\limits_{n<\omega} H_n$. Naturally, $H$ is equipped with
product topology of discrete $H_n$'s and the product probability
measure. Then there exists a null non--meager subgroup of $H$ but it is
consistent that there is no meager non--null such subgroup. It is natural to
ask now:

\begin{problem}
\begin{enumerate}
\item Does every locally compact group (with complete Haar measure) admit a
  null non--meager subgroup?   
\item Is it consistent that no locally compact group has a meager non--null
  subgroup? 
\end{enumerate} 
\end{problem}

In relation to Theorem \ref{mainthm}, we still should ask:

  \begin{problem}
    Is it consistent that there exists a translation invariant Borel 
    hull for the meager ideal on $\can$? On ${\mbR}$?
  \end{problem}


\begin{thebibliography}{1}

\bibitem{BaJu95}
Tomek Bartoszy\'nski and Haim Judah.
\newblock {\em {Set Theory: On the Structure of the Real Line}}.
\newblock A K Peters, Wellesley, Massachusetts, 1995.

\bibitem{B3}
James~E. Baumgartner.
\newblock {Iterated forcing}.
\newblock In A.~Mathias, editor, {\em Surveys in Set Theory}, volume~87 of {\em
  London Mathematical Society Lecture Notes}, pages 1--59, Cambridge, Britain,
  1978.

\bibitem{Ba85}
James~E. Baumgartner.
\newblock {Sacks forcing and the total failure of Martin's axiom}.
\newblock {\em Topology and its Applications}, 19:211--225, 1985.

\bibitem{FRSh:1031}
Tomasz Filipczak, Andrzej Roslanowski, and Saharon Shelah.
\newblock {On Borel hull operations}.
\newblock {\em Real Analysis Exchange}, 40:129--140, 2015.
\newblock arxiv:1308.3749.

\bibitem{J}
Thomas Jech.
\newblock {\em {Set theory}}.
\newblock Springer Monographs in Mathematics. Springer-Verlag, Berlin, 2003.
\newblock The third millennium edition, revised and expanded.

\bibitem{Ro0x}
Andrzej Ros{\l}anowski.
\newblock {$n$--localization property}.
\newblock {\em Journal of Symbolic Logic}, 71:881--902, 2006.
\newblock arxiv:math.LO/0507519.

\bibitem{RoSh:470}
Andrzej Roslanowski and Saharon Shelah.
\newblock {Norms on possibilities I: forcing with trees and creatures}.
\newblock {\em {Memoirs of the American Mathematical Society}}, 141(671):xii +
  167, 1999.
\newblock arxiv:math.LO/9807172.

\bibitem{RoSt08}
Andrzej Ros{\l}anowski and Juris Stepr\=ans.
\newblock {Chasing Silver}.
\newblock {\em Canadian Mathematical Bulletin}, 51:593--603, 2008.
\newblock arxiv:math.LO/0509392.

\end{thebibliography}

\end{document}